\documentclass[letterpaper, 10 pt, conference]{ieeeconf}


\IEEEoverridecommandlockouts
\overrideIEEEmargins
\pdfobjcompresslevel=0

\usepackage{tikz}
\usepackage[utf8]{inputenc}
\usepackage{csquotes}
\usepackage[english]{babel}
\usepackage[acronym]{glossaries}
\usepackage{blindtext}
\usepackage{amsmath}
\usepackage{amssymb}

\usepackage{amsthm}
\usepackage{mathtools}
\usepackage{physics}
\usepackage[makeroom]{cancel}

\newtheorem{theorem}{Theorem}[section]

\newtheorem{corollary}{Corollary}[theorem]
\newtheorem{lemma}{Lemma}[section]

\newtheorem{proposition}{Proposition}[section]
\newtheorem*{proposition*}{Proposition}

\newtheorem{assumption}{Assumption}[section]

\theoremstyle{remark}

\newtheorem*{remark*}{Remark}

\theoremstyle{definition}

\DeclareMathOperator{\atantwo}{atan2}
\newcommand{\hessian}[1]{\ensuremath{H_{#1}}}

\newcommand{\kronmatrix}[1]{\ensuremath{\boldsymbol{#1}}}
\newcommand{\vecmatrix}[1]{\ensuremath{\boldsymbol{\vec{#1}}}}

\newglossaryentry{vectorize}{name=\ensuremath{\text{vec}},description={vectorisation of
matrix}}
\newglossaryentry{identity}{name=\ensuremath{I},description={identity matrix}}
\newglossaryentry{ones}{name=\ensuremath{\vec{1}},description={vector of ones}}
\newglossaryentry{zeros}{name=\ensuremath{\vec{0}},description={vector of zeros}}
\newglossaryentry{input}{name=\ensuremath{u},description={high level control input}}

\newglossaryentry{xposition}{name=\ensuremath{a},description={x position}}
\newglossaryentry{yposition}{name=\ensuremath{b},description={y position}}
\newglossaryentry{position}{name=\ensuremath{r},description={position}}
\newglossaryentry{stackedpos}{name=\ensuremath{r}, description={stacked positions of all agents}}
\newglossaryentry{orientation}{name=\ensuremath{\theta},description={orientation}}
\newglossaryentry{vel}{name=\ensuremath{v},description={velocity}}
\newglossaryentry{angvel}{name=\ensuremath{\omega},description={angular velocity}}
\newglossaryentry{angerr}{name=\ensuremath{\phi},description={orientation error}}
\newglossaryentry{poserr}{name=\ensuremath{\xi},description={position error}}
\newglossaryentry{poserrx}{name=\ensuremath{\xi_{\gls{xposition}}},description={x position error}}
\newglossaryentry{poserrxi}{name=\ensuremath{\xi_{\gls{xposition}_i}},description={x position error for i-th coordinate}}
\newglossaryentry{poserrxj}{name=\ensuremath{\xi_{\gls{xposition}_j}},description={x position error j-th coordinate}}
\newglossaryentry{poserry}{name=\ensuremath{\xi_{\gls{yposition}}},description={y position error}}
\newglossaryentry{poserryi}{name=\ensuremath{\xi_{\gls{yposition}_i}},description={y position error for i-th coordinate}}
\newglossaryentry{poserryj}{name=\ensuremath{\xi_{\gls{yposition}_j}},description={y position error for j-th coordinate}}
\newglossaryentry{state}{name=\ensuremath{x},description={full state}}
\newglossaryentry{unobsstate}{name=\ensuremath{w}, description={part of the state not affecting cost function}}
\newglossaryentry{outputmap}{name=\ensuremath{g},description={output map}}
\newglossaryentry{dynamics}{name=\ensuremath{f},description={plant + llc dynamics}}
\newglossaryentry{asymptoticposition}{name=\ensuremath{\tilde{\gls*{position}}},description={asymptotic position}}

\newglossaryentry{llcinput}{name=\ensuremath{u},description={low level controller target}}
\newglossaryentry{llcinputx}{name=\ensuremath{\gls{llcinput}_{\gls{xposition}}},description={low level controller target x position}}
\newglossaryentry{llcinputxi}{name=\ensuremath{\gls{llcinput}_{\gls{xposition}_i}},description={low level controller target x position for i-th coordinate}}
\newglossaryentry{llcinputxj}{name=\ensuremath{\gls{llcinput}_{\gls{xposition}_j}},description={low level controller target x position for j-th coordinate}}
\newglossaryentry{llcinputy}{name=\ensuremath{\gls{llcinput}_{\gls{yposition}}},description={low level controller target y position}}
\newglossaryentry{llcinputyi}{name=\ensuremath{\gls{llcinput}_{\gls{yposition}_i}},description={low level controller target y position for i-th coordinate}}
\newglossaryentry{llcinputyj}{name=\ensuremath{\gls{llcinput}_{\gls{yposition}_j}},description={low level controller target y position for j-th coordinate}}
\newglossaryentry{llcinputgain}{name=\ensuremath{k},description={low level controller gain}}

\newglossaryentry{graph}{name=\ensuremath{G},description={formation graph}}
\newglossaryentry{edges}{name=\ensuremath{\mathcal{E}},description={formation edges}}
\newglossaryentry{vertices}{name=\ensuremath{\mathcal{V}},description={agents}}
\newglossaryentry{incidence}{name=\ensuremath{B},description={incidence matrix}}
\newglossaryentry{distances}{name=\ensuremath{d},description={distances matrix for edges}}
\newglossaryentry{distance}{name=\ensuremath{d},description={directional distance on edge}}
\newglossaryentry{laplacian}{name=\ensuremath{L_{\gls{graph}}},description={Laplacian of undirected graph}}
\newglossaryentry{nagents}{name=\ensuremath{N},description={number of agents}}
\newglossaryentry{neighbours}{name=\ensuremath{\mathcal{N}},description={neighbours of agent in undirected graph}}
\newglossaryentry{neighboursdirected}{name=\ensuremath{\mathcal{N}_{\gls{incidence}}},description={neighbours of agents in directed graph given by incidence matrix}}

\newglossaryentry{target}{name=\ensuremath{\mathbf{\tau}},description={target position}}
\newglossaryentry{targetagentdist}{name=\ensuremath{\delta},description={distance from agents in fomration with respect to the target}}

\newglossaryentry{cost}{name=\ensuremath{\Phi},description={objective function}}
\newglossaryentry{reducedcost}{name=\ensuremath{\tilde{\gls{cost}}},description={objective function}}
\newglossaryentry{inputsteadystatemap}{name=\ensuremath{\hat{h}},description={input to steady state map}}
\newglossaryentry{inputsteadystateoutputmap}{name=\ensuremath{h},description={input to steady state output map}}
\newglossaryentry{inputsteadystateoutputmapandinput}{name=\ensuremath{H},description={input to steady state output map and input}}
\newglossaryentry{feedbackoptimizationepsilon}{name=\ensuremath{\epsilon},description={feedback optimization epsilon}}
\newglossaryentry{jacobian}{name=\ensuremath{\mathbb{J}},description={Jacobian}}

\newglossaryentry{costformationgain}{name=\ensuremath{\gamma_1},description={cost function formation gain}}
\newglossaryentry{costtargetgain}{name=\ensuremath{\gamma_2},description={cost function target tracking gain}}

\newglossaryentry{costlipschitzconst}{name=\ensuremath{l},description={cost function lipschitz const}}
\newglossaryentry{hlipschitzconst}{name=\ensuremath{q},description={input steady state output map lipschitz const}}
\newglossaryentry{llcexpstabilityalpha}{name=\ensuremath{\alpha},description={lower bound llc Lyapunov function}}
\newglossaryentry{llcexpstabilitybeta}{name=\ensuremath{\beta},description={upper bound llc Lyapunov function}}
\newglossaryentry{llcexpstabilitygamma}{name=\ensuremath{\gamma},description={llc Lie derivative coefficient}}
\newglossaryentry{llcexpstabilitymu}{name=\ensuremath{\mu},description={llc partial derivative coefficient}}
\newglossaryentry{anglebound}{name=\ensuremath{\kappa},description={angle bound}}
\usepackage{hyperref}
\usepackage[capitalise]{cleveref}

\usepackage{enumitem}

\title{\LARGE \bf
Distributed Feedback Optimisation for Robotic Coordination
}


\author{Antonio Terpin, Sylvain Fricker, Michel Perez, Mathias Hudoba de Badyn, Florian Dörfler
\thanks{The authors are with the Automatic Control Laboratory, ETH Z\"{u}rich, Physikstrasse 3, 8092 Z\"{u}rich, Switzerland. Emails: \texttt{\{aterpin, frickers, miperez, mbadyn, dorfler\}@ethz.ch}. This research is supported by the SNSF through NCCR Automation. 
}
}

\begin{document}

\maketitle
\thispagestyle{plain}
\pagestyle{plain}

\begin{abstract}
Feedback optimisation is an emerging technique aimed at steering a system to an optimal steady state for a given objective function. We show that it is possible to employ this control strategy in a distributed manner. Moreover, we prove asymptotic convergence to the set of optimal configurations. To this scope, we show that exponential stability is needed only for the portion of the state that affects the objective function. This is showcased by driving a swarm of agents towards a target location while maintaining a target formation. Finally, we provide a sufficient condition on the topological structure of the specified formation to guarantee convergence of the swarm in formation around the target location.
\end{abstract}

\section{Introduction}
\emph{Feedback optimisation} is an emerging technique aimed at steering a system to a trajectory computed online that is optimal with respect to a selected objective function, relying on little information on the controlled plant.
In this paper, we apply feedback optimisation on distributed systems and extend convergence results to systems where a part of the state is only asymptotically but not exponentially stable. The results are then used to drive a swarm of robots in a given target formation towards a target location in  a distributed manner.

A thorough review on the feedback-based optimisation methodology can be found in \cite{Hauswirth2021OptimizationControllers}. Remarkably, the plant dynamics need not be known. Instead, this approach relies only on the knowledge of the steady-state input-output sensitivity, allowing model-free optimisation and constraint handling \cite{Hauswirth2020Anti-WindupOptimization, Haberle2021Non-ConvexConstraints}. Furthermore, being a feedback-based approach results in the well-known advantages of feedback control, namely robustness to model mismatch and disturbances \cite{Colombino2019TowardsOptimization, Bernstein2019OnlineOptimization}. A classical example is congestion control for cyber-networks, where source controllers and link dynamics are the modelled feedback loop \cite{Wang2011AOptimization}. 
In recent years, gradient projection algorithms and feedback optimisation gained traction within the field of power systems. The fact that operational constraints are satisfied at all times make these techniques feasible for online implementation \cite{Gan2016AnNetworks, DallAnese2018OptimalPursuit}. A detailed overview on offline and online control techniques for electric power systems can be found in \cite{Molzahn2017ASystems}. Stability of such systems is studied in \cite{Menta2018StabilitySystems} and a recent experimental validation obtained on power grids empirically shows the premises of this approach \cite{Ortmann2020ExperimentalGrids}. 
The applicability of the methodology for linear time-invariant systems with saddle-flow dynamics and constrained convex optimisation problems is shown in  \cite{Chang2019Saddle-FlowOptimization}, and non-smooth dynamical systems arising in time-varying optimisation are addressed in \cite{Tang2018AOptimization, Hauswirth2018Time-varyingSystems}. \cite{Simonetto2020Time-VaryingApplications} reviews a broad class of algorithms for time-varying optimizations and shows how this can be applied to drive a single robot towards a target while avoiding collision. Convergence and stability analysis for regulation of linear time-invariant systems towards the optimal solution of a time-varying convex optimisation problem is studied in \cite{Colombino2020OnlineTracking}. Constraints are included in \cite{Lawrence2020Linear-ConvexControl, Bianchin2021Time-VaryingFlows} and \cite{Hauswirth2020OnFlows} extends to non-linear systems and non-convex problems. 

Much of the work on robotic coordination for flocking relies on classical approaches inspired by the so-called Reynolds principles  \cite{Reynolds1987FlocksModel} and typically employs the concept of potential forces \cite{Olfati-Saber2006FlockingTheory}. Along the lines of closed-loop optimisation for robotics coordination, feedback optimisation is used in \cite{Belgioioso2021Sampled-DataTracking} and \cite{Krilasevic2021LearningControl} to learn generalised Nash Equilibrium in a non-cooperative game-theoretical setting. However, both of these require centralised or semi-decentralised algorithms.
To the best of our knowledge, this paper is the first work that uses feedback optimisation for robotic coordination in a distributed manner. Our theoretical analysis on the asymptotic convergence of the closed-loop system builds instead on \cite{Hauswirth2021TimescaleOptimization}, where the authors quantify the required timescale separation to ensure stability and convergence of the interconnection of an exponentially stable plant and different schemes of feedback optimisation. 

Our contributions are threefold. First, we show that it is possible to implement a feedback optimisation scheme in a distributed manner using robotic coordination as running example. Second, we prove the convergence of the swarm to the configuration that minimises the selected cost function. 
In particular, we build on \cite{Hauswirth2021TimescaleOptimization} to show that exponential stability is required only for the portion of the state that affects the considered cost function whereas for the remainder of the state variables, only asymptotic stability is necessary. Furthermore, we derive conditions on the objective function coefficients and topology of the formation graph. These guarantee that the optimal closed-loop steady-state is such that the agents asymptotically gather in formation around a target location.

The remainder of this paper is organised as follows. \autoref{problemstatement} introduces the problem and the control scheme we want to pursue. In \autoref{mainresult}, we analyse the closed-loop system and we present the main results of our work. 
Finally, \autoref{examples} presents empirical results and concrete instances of the considered problem.

\subsection{Notation}
Given a $n$-tuple $(x_1, x_2, \dots, x_n)$, $x = [x_1, x_2, \dots, x_n]^T = [x_i]^T_{i \in \{1,\dots, n\}}$ is its associated vector and $\textbf{diag}(x^T)$ is the square matrix with the components of $x$ on its diagonal. The 2-norm is denoted $\norm{x}$ and for a matrix $M$, $\norm{M}$ is the norm induced by the 2-norm $\norm{\cdot}$. The spectrum of $M$ is denoted by $\text{spec}[M]$ and its null space is $\text{null}[M]$.

The identity matrix of size $n$ is $\gls*{identity}_n$, whereas $\gls*{ones}_n$ and $\gls*{zeros}_n$ denote the column vector of ones and zeros, respectively. The Kronecker product is denoted by $\otimes$ and we define $\kronmatrix{M}_n = M \otimes \gls*{identity}_n$ and $\vecmatrix{M}_n = \gls*{ones}_n \otimes M$.

Given a function $f\colon\mathbb{R}^n\to \mathbb{R}$, $\nabla f(x) = [\frac{\partial{f(x)}}{\partial{x_i}}]^T_{i \in \{1, \dots, n\}}$ is the gradient of $f$, and with $y = [x_i]^T_{i \in I}$, $I \subseteq \{1, \dots, n\}$, we define $\nabla_y f(x) = [\frac{\partial{f(x)}}{\partial{x_i}}]^T_{i \in I}$. The cardinality of a set is denoted by $\abs{I}$. Finally, the \emph{Jacobian} of $g \colon \mathbb{R}^n \to \mathbb{R}^m$ is denoted by $\gls*{jacobian}g$. We adopt the definition $\gls*{jacobian}g(x) = [\nabla g_i(x)]^T_{i \in \{1,\dots,n\}}$.

\subsection{Graph Theory preliminaries}
An \emph{unweighted undirected graph} is a pair $\gls*{graph} = (\gls*{vertices},\gls*{edges})$, where $\gls*{vertices} = \{1, \dots, n\}$ is the set of nodes and $\gls*{edges} \subseteq S^2(\gls*{vertices})$ is the set of edges, $S^k(\gls*{vertices})$ being the symmetric $k$-th power of $\gls*{vertices}$. The neighbours of a node $i$ are defined as $\gls*{neighbours}(i) = \{j \in \gls*{vertices} | \{i,j\} \in \gls*{edges}\}$. The \emph{adjacency matrix} $\mathcal{A}$ of the graph is defined entry-wise as $[\mathcal{A}]_{ij} \in \{0, 1\}$, $[\mathcal{A}]_{ij} = 1$ if and only if $\{i,j\} \in \gls*{edges}$. The \emph{Laplacian} of the graph is the matrix $\gls*{laplacian} = \textbf{diag}(\gls*{ones}_{n}^T\mathcal{A}^T)- \mathcal{A}$.

It is always possible to assign to each edge $\{i,j\}$ of $\gls*{graph}$ a unique label $e \in \{1, \dots, m\}$, where $m$ is the number of distinct edges in $\gls*{edges}$, and an arbitrary orientation. An edge is then described as an ordered pair $e \equiv (i,j)$. 
The incidence matrix $\gls*{incidence} \in \{-1,0,1\}^{n\times m}$ is used to describe an arbitrary orientation. That is, $[\gls*{incidence}_{ie}] = 1$ if and only if $e \equiv (i,j)$, $[\gls*{incidence}_{ie}] = -1$ if and only if $e \equiv (j,i)$ and finally $[\gls*{incidence}_{ie}] = 0$ if and only if the edge $e$ is not incident to node $i$. 

The arbitrary orientation selected for $\gls*{incidence}$ allows to define in-neighbours of a node $i$ as $\gls*{neighboursdirected}^{i}(i) = \{e \in \gls*{edges} |\exists \, j \in \gls*{vertices}, e \equiv (j,i)\}$. Similarly, the out-neighbours of $i$ are defined as $\gls*{neighboursdirected}^{o}(i)= \{e \in \gls*{edges} |\exists \,  j \in \gls*{vertices}, e \equiv (i,j)\}$.

It is useful to recall that $\gls*{laplacian} = \gls*{incidence}\gls*{incidence}^T$. If $\gls*{graph}$ is connected, then $\text{null}[\gls*{incidence}^T] = \text{null}[\gls*{laplacian}] = \text{span}[\gls*{ones}_{n}]$ \cite[Lemma 6.2, Theorem 6.6]{FrancescoBullo2018LecturesSystems}, where the first equality is a consequence of the Finite Rank Lemma \cite[Theorem 7.6]{Lygeros2015LectureZurich} with linear map $\gls*{incidence}$, adjoint $\gls*{incidence}^T$ and Hilbert spaces $\mathbb{R}^{n}$ and $\mathbb{R}^{m}$ with the canonical inner product. All the remaining $n - 1$ eigenvalues of $\gls*{laplacian}$ are strictly positive \cite[Lemma 6.5]{FrancescoBullo2018LecturesSystems}, with the second-smallest one, denoted by $\lambda_2$, known as the \emph{algebraic connectivity} of the graph \cite[Definition 6.7]{FrancescoBullo2018LecturesSystems}.

\section{Problem Statement}\label{problemstatement}
The goal of this work is to show how to apply feedback optimisation in a distributed manner by driving a swarm of $\gls*{nagents}$ agents into a given \emph{target formation} around a given \emph{target location} $\gls*{target}$. Recall that for feedback optimization in general, we only need to know the steady-state input-output sensitivities. However, for the sake of a more detailed and design-oriented analysis, and to show how to relax some of the assumptions made in \cite{Hauswirth2021TimescaleOptimization}, we consider the plant model to be known. 

We consider unicycle dynamics for the generic $i$-th agent with state $\gls*{state}_i = [\gls*{position}_i^T,\gls*{orientation}_i]^T$, where $\gls*{position}_i = [\gls*{xposition}_i,\gls*{yposition}_i]^T \in \mathbb{R}^2$ is the position of the $i$-th agent in the $\gls*{xposition}-\gls*{yposition}$ plane and $\gls*{orientation}_i \in (-\pi,\pi]$ its orientation with respect to the $\gls*{xposition}$-axis. The dynamics are
\begin{equation}\label{eq:plant}
    \dot{\gls*{xposition}}_i = \gls*{vel}_i \cos(\gls*{orientation}_i),\quad \dot{\gls*{yposition}}_i = \gls*{vel}_i \sin(\gls*{orientation}_i),\quad\dot{\gls*{orientation}}_i = \gls*{angvel}_i,
\end{equation}
where the low-level control inputs $\gls*{vel}_i$ and $\gls*{angvel}_i$ are to be defined.

In particular, we design the actuation mechanism available on each agent to track a given fixed reference position $\gls*{llcinput}_i$. Consider the relative displacement error $\gls*{poserr}_i \in \mathbb{R}^2$  between the agents current position $\gls*{position}_i$ and the fixed reference position $\gls*{llcinput}_i$ and the relative heading error $\gls*{angerr}_i \in (-\pi,\pi]$ denoting the angle between the agents orientation $\gls*{orientation}_i$ and the straight line connecting the agents position to the reference position $\gls*{llcinput}_i$. The error variables then read as
\begin{align*}
    \gls*{poserrxi} &= \gls*{llcinputxi} - \gls*{xposition}_i, & \gls*{poserryi} &= \gls*{llcinputyi} - \gls*{yposition}_i,\\ 
    \gls*{poserr}_i &= \sqrt{\gls*{poserrxi}^2 + \gls*{poserryi}^2},  & \gls*{angerr}_i &= \atantwo(\gls*{poserryi}, \gls*{poserrxi})- \gls*{orientation}_i,
\end{align*}
with resulting error dynamics (see \cref{proofs:expstabilityllc})
\begin{equation}\label{eq:err:dynamics}
    \begin{split}
        \dot{\gls*{poserr}_i} &= -\gls*{vel}_i\cos(\gls*{angerr}_i),\\
        \dot{\gls*{angerr}_i} &= \frac{\gls*{vel}_i}{\gls*{poserr}_i}\sin(\gls*{angerr}_i) - \gls*{angvel}_i.
    \end{split}
\end{equation}
We propose the low-level control (LLC) law ($\gls*{llcinputgain}_i > 0$)%
\begin{equation}\label{eq:plant:llc}
    \begin{split}
        \gls*{vel}_i &= \gls*{llcinputgain}_i\gls*{poserr}_i\cos(\gls*{angerr}_i),\quad \\
        \gls*{angvel}_i &= \gls*{llcinputgain}_i\left(\cos(\gls*{angerr}_i) + 1\right)\sin(\gls*{angerr}_i).
    \end{split}
\end{equation}

\begin{lemma}\label{expstabilityllc}
The low-level control law \eqref{eq:plant:llc} 
almost globally asymptotically stabilises \eqref{eq:err:dynamics} around the origin. 
Moreover, $\gls*{poserr} = 0$ is a globally exponentially stable equilibrium.
\end{lemma}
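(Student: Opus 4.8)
The plan is to substitute the low-level control law \eqref{eq:plant:llc} into the error dynamics \eqref{eq:err:dynamics} and exploit the cascade structure that emerges. Inserting $v_i = k_i\xi_i\cos(\phi_i)$ and $\omega_i = k_i(\cos(\phi_i)+1)\sin(\phi_i)$, the two cross terms in the $\dot\phi_i$ equation cancel and the closed loop collapses to the decoupled scalar form
\begin{equation*}
  \dot\xi_i = -k_i\,\xi_i\cos^2(\phi_i), \qquad \dot\phi_i = -k_i\sin(\phi_i).
\end{equation*}
Since the angular subsystem is autonomous, the analysis reduces to a phase-line argument for $\phi_i$ on $(-\pi,\pi]$ together with a scalar linear time-varying equation for $\xi_i$ driven by $\phi_i$, and I would treat the two claims separately.

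For the almost-global asymptotic stability claim I would study $\dot\phi_i = -k_i\sin(\phi_i)$ with the Lyapunov candidate $W(\phi_i) = 1-\cos(\phi_i)$, for which $\dot W = -k_i\sin^2(\phi_i)\le 0$. The equilibria are $\phi_i=0$ and $\phi_i=\pi$, the latter unstable since linearising there gives $\dot\psi = k_i\psi$. Because $W<2=W(\pi)$ on $(-\pi,\pi)$ and the sublevel sets are forward invariant, LaSalle's principle forces $\phi_i\to 0$ from every initial condition except the measure-zero set $\{\phi_i=\pi\}$. Combined with $\xi_i\to 0$ (established below) and Lyapunov stability of the origin, this yields almost-global asymptotic stability of $(\xi_i,\phi_i)=(0,0)$.

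The global exponential stability of $\xi_i=0$ is the crux, because the contraction rate $\cos^2(\phi_i)$ vanishes at $\phi_i=\pm\pi/2$, so a direct bound on $\tfrac12\xi_i^2$ does not deliver a uniform rate. My approach is to integrate the scalar equation explicitly,
\begin{equation*}
  \xi_i(t) = \xi_i(0)\exp\Bigl(-k_i\int_0^t\cos^2(\phi_i(s))\,ds\Bigr),
\end{equation*}
and then write $\cos^2 = 1-\sin^2$, so that the only loss relative to the nominal rate $k_i$ is $k_i\int_0^t\sin^2(\phi_i(s))\,ds$. The key observation is that this deficit is uniformly bounded by the total drop of the angular Lyapunov function: from $\dot W = -k_i\sin^2(\phi_i)$ and $W\ge 0$ one gets $\int_0^\infty\sin^2(\phi_i)\,ds = \tfrac{1}{k_i}\bigl(W(0)-W(\infty)\bigr) \le \tfrac{1}{k_i}\bigl(1-\cos(\phi_i(0))\bigr) \le \tfrac{2}{k_i}$, independently of the initial data. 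Substituting back gives $\xi_i(t)\le e^{2}\,\xi_i(0)\,e^{-k_i t}$, i.e. global exponential stability with rate $k_i$ and a uniform overshoot constant.

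The main obstacle is precisely this last step: recognising that the instantaneous loss of contraction near $\phi_i=\pm\pi/2$ is integrable and that its integral is controlled by the total decrease of $W$, so the exponential rate $k_i$ is recovered uniformly over all initial angles, including the singular case $\phi_i(0)=\pi$. The remaining work, substituting the control law and verifying the term cancellation, is routine, and I would relegate the derivation of \eqref{eq:err:dynamics} itself to the referenced appendix.
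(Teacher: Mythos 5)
Your proposal is correct, and the reduction to the decoupled system $\dot\xi = -k\xi\cos^2\phi$, $\dot\phi = -k\sin\phi$ together with the explicit exponential-integral solution for $\xi$ matches the paper. Where you genuinely diverge is in how the integral $\int_0^t\cos^2(\phi(s))\,ds$ is bounded from below. The paper argues trajectory-wise: it waits for a time $T$ after which $\abs{\phi(t)}<\eta<\pi/2$, handles $[t_0,T]$ via the mean value theorem and a continuity/minimisation argument to extract a positive rate $K_1$, and handles $[T,\infty)$ via the uniform bound $\cos^2\eta$; the resulting overshoot constant $M$ and rate $\mu=\min(K_1,K_2)$ depend on the particular trajectory. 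You instead write $\cos^2 = 1-\sin^2$ and observe that the total contraction deficit $\int_0^\infty\sin^2(\phi(s))\,ds$ equals $\bigl(W(0)-W(\infty)\bigr)/k \le 2/k$ by the dissipation identity $\dot W = -k\sin^2\phi$ for $W=1-\cos\phi$, giving the clean uniform estimate $\abs{\xi(t)}\le e^2\abs{\xi(0)}e^{-kt}$. This is a sharper conclusion: the rate is exactly $k$ and the overshoot constant $e^2$ is independent of the initial heading (covering $\phi(0)=\pi$ as well), whereas the paper's constants are not uniform over initial conditions, which is arguably what ``globally exponentially stable'' should deliver. Your choice of $W=1-\cos\phi$ over the paper's $\frac12\phi^2$ for the angular subsystem is a minor variation; the LaSalle/instability-of-$\phi=\pi$ argument for almost-global convergence is essentially identical. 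The only cosmetic caveat, shared with the paper, is that the $\dot\phi$ equation requires $\xi>0$, which your exponential bound on $\xi$ does not violate in finite time.
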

\begin{proof}
The proof is provided in \cref{proofs:expstabilityllc}.
\end{proof}

Every agent has access to its own global position and the relative positions of its neighbours specified through the unweighted formation graph $\gls*{graph} = (\gls*{vertices},\gls*{edges})$, where the set of vertices $\gls*{vertices}$ consists of all $\gls*{nagents}$ agents and the set of edges $\gls*{edges}$ captures the structure of the target formation. We assume the undirected version of $\gls*{graph}$ to be connected, and the target formation to be uniquely defined by the \emph{desired inter-agent relative displacements} $\gls*{distance}_{ij}$ for all $(i,j) \in \gls*{edges}$. That is, $\forall\, i\neq j \in \gls*{vertices}$, $\gls*{stackedpos}_i - \gls*{stackedpos}_j$ is uniquely defined whenever  $\gls*{stackedpos}_i - \gls*{stackedpos}_j = \gls*{distance}_{ij}\;\forall (i,j) \in \gls*{edges}$, i.e. when the agents are in the target formation.

Recall that the orientation can be specified through the incidence matrix $\gls*{incidence}$ (see example in \autoref{examples}).  

For simplicity, we assume that a robot always has access to the relative displacement to the agents it is adjacent to in the target formation specified by $\gls*{graph}$, regardless of their current distance.

Let $\gls*{distances}=[[\gls*{distance}_{ij}^T]_{(i,j) \in \gls*{edges}}]^T$ be the stacked desired inter-agent relative displacements according to the ordering of the edges given by the incidence matrix $\gls*{incidence}$, and let $\gls*{position} = [[\gls*{position}_i^T]_{i \in \gls*{vertices}}]^T$ be the stacked positions of all the agents. The target location of the swarm, $\gls*{target}$, is assumed to be known for all agents. 
Finally, we define $r^* = [[\gls*{position}_i^{*^T}]_{i \in \gls*{vertices}}]^T = \vecmatrix{\gls*{target}}_{\gls*{nagents}} + \gls*{targetagentdist}$ to be the \emph{desired final configuration}, where $\gls*{targetagentdist}$ represents the displacements of the agents from the target location $\gls*{target}$ when being in the target formation defined by \gls*{distances}, that is $\kronmatrix{\gls*{incidence}}_2^T\gls*{stackedpos}^* = \gls*{distances}$. Moreover, when agents are in the desired final configuration, it holds that $\frac{1}{\gls*{nagents}}\sum_{i \in \gls*{vertices}} r_i^* = \gls*{target}$.

To tackle this problem by means of feedback optimisation, we propose the cost function
\begin{subequations}\label{eq:cost:1}
\begin{align}
    \gls*{cost}(\gls*{stackedpos}) = \frac{\gls*{costformationgain}}{2}\sum_{(i,j) \in \gls*{edges}} \norm{\gls*{position}_i - \gls*{position}_j - \gls*{distance}_{ij}}^2 \label{eq:cost:1:formation}\\
    + \frac{\gls*{costtargetgain}}{2}\sum_{i \in \gls*{vertices}}\norm{\gls*{position}_i - \gls*{target}}^2\label{eq:cost:1:target},
\end{align}
\end{subequations}
where $\gls*{costformationgain},\gls*{costtargetgain} > 0$ denote the weights on the formation error and the distance from the target respectively. 
We manipulate the terms of the cost function to write it in matrix form:
\begin{align*}
    \text{\eqref{eq:cost:1:formation}} &= \gls*{cost}_{\gls*{distance}}(\gls*{stackedpos}) =\frac{\gls*{costformationgain}}{2}\sum_{(i,j) \in \gls*{edges}}(\gls*{position}_i - \gls*{position}_j)^T(\gls*{position}_i - \gls*{position}_j) \\ &\qquad \qquad \qquad -\gls*{costformationgain}\sum_{(i,j) \in \gls*{edges}}(\gls*{position}_i - \gls*{position}_j)^T\gls*{distance}_{ij} + const. \\
    &= \frac{\gls*{costformationgain}}{2} \left[ \gls*{stackedpos}^T(\gls*{laplacian} \otimes \gls*{identity}_2)\gls*{stackedpos} - 2\gls*{distances}^T(B \otimes \gls*{identity}_2)^T\gls*{stackedpos}\right] + const.\\
    \text{\eqref{eq:cost:1:target}} &= \gls*{cost}_{\gls*{target}}(\gls*{stackedpos}) =  \frac{\gls*{costtargetgain}}{2}\sum_{i \in \gls*{vertices}}\left(\gls*{position}_i^T\gls*{position}_i - 2\gls*{target}^T\gls*{position}_i\right) + const.  \\
    &= \frac{\gls*{costtargetgain}}{2}\left[\gls*{stackedpos}^T\gls*{stackedpos} - 2(\gls*{ones}_{\gls*{nagents}} \otimes \gls*{target})^T\gls*{stackedpos}\right] + const.
\end{align*}
Hence, the cost function can equivalently be expressed as
\begin{multline*}
    \gls*{cost}(\gls*{stackedpos}) = \frac{\gls*{costformationgain}}{2} \left[ \gls*{stackedpos}^T\kronmatrix{\gls*{laplacian}}_2\gls*{stackedpos} - 2\gls*{distances}^T\kronmatrix{B}_2^T\gls*{stackedpos}\right]\\
    + \frac{\gls*{costtargetgain}}{2}\left[\gls*{stackedpos}^T\gls*{stackedpos} - 2\vecmatrix{\gls*{target}}_{\gls*{nagents}}^T\gls*{stackedpos}\right] + const.,
\end{multline*}
and its gradient is
\begin{equation}\label{eq:cost:gradient}
    \nabla\gls*{cost}(\gls*{stackedpos}) = \gls*{costformationgain}\left[\kronmatrix{\gls*{laplacian}}_2\gls*{stackedpos} - \kronmatrix{\gls*{incidence}}_2\gls*{distances}\right] +\gls*{costtargetgain}\left[\gls*{stackedpos} - \vecmatrix{\gls*{target}}_{\gls*{nagents}}\right].
\end{equation}
\begin{figure}[!ht]
\centering
  \includegraphics[width =\columnwidth]{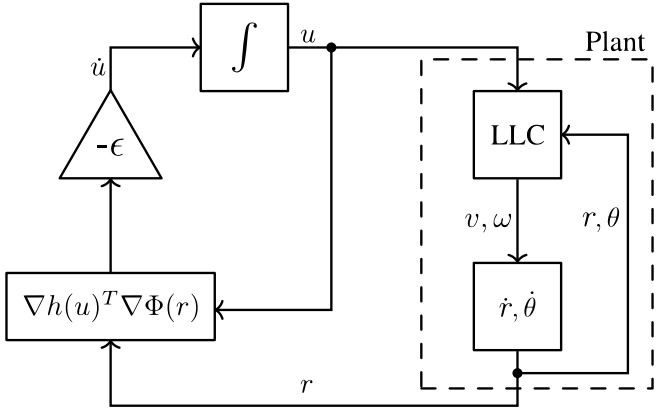}
  \caption{Control scheme for the considered problem.}
  \label{fig:controlproblem}
\end{figure}

\noindent In general, the closed loop system for the feedback optimisation control scheme is
\begin{subequations}\label{eq:closedloopsystem}
\begin{align}
\dot{\gls*{state}} &= f(\gls*{state},\gls*{input})\label{eq:statedynamics}\\
y &= g(\gls*{state},\gls*{input}) =  \gls*{stackedpos}\label{eq:outputmap}\\
\dot{\gls*{input}} &= -\gls*{feedbackoptimizationepsilon}\gls*{jacobian}\gls*{inputsteadystateoutputmap}(u)^T\nabla\gls*{cost}(\gls*{stackedpos})\label{eq:controllaw},
\end{align}
\end{subequations}
where the steady-state input-output map,  $\gls*{inputsteadystateoutputmap}$, is defined as $\gls*{inputsteadystateoutputmap}(\gls*{input}) = \lim_{t \to \infty} \gls*{outputmap}(\gls*{state}(t),\gls*{input})$ and its sensitivity is $\gls*{jacobian}\gls*{inputsteadystateoutputmap}(\gls*{input})$. \eqref{eq:statedynamics} is the plant dynamics comprising the low-level controller, \eqref{eq:outputmap} is the output map and \eqref{eq:controllaw} is the feedback-optimisation control-law dynamics.

For the problem setup outlined above, we have \[\gls*{inputsteadystateoutputmap}(\gls*{input}) = \gls*{input}, \; \gls*{jacobian}\gls*{inputsteadystateoutputmap}(u) = \gls*{identity}_{2\gls*{nagents}},\] 
and \eqref{eq:controllaw} reads as
\begin{equation}\label{eq:controllaw:explicit}
    \dot{\gls*{input}} = -\gls*{feedbackoptimizationepsilon}\gls*{costformationgain}\left[\kronmatrix{\gls*{laplacian}}_2\gls*{stackedpos} - \kronmatrix{\gls*{incidence}}_2\gls*{distances}\right]-\gls*{feedbackoptimizationepsilon}\gls*{costtargetgain}\left[\gls*{stackedpos} - \vecmatrix{\gls*{target}}_{\gls*{nagents}}\right],
\end{equation}
while \eqref{eq:statedynamics} is given by the plant dynamics \eqref{eq:plant} with low level controller \eqref{eq:plant:llc}. The scheme of the closed-loop system is shown in \autoref{fig:controlproblem}.
%

\begin{remark*}
As the focus of this paper is on distributed feedback optimisation, we consider a fixed global frame that is known to all agents for simplicity of exposition. As we shall discuss in \autoref{distributedcontrollaw}, each agent only needs access to relative displacements to its neighbours and to the target. Thus, the information needed is local and any result in this paper can be established also considering different fixed local frames for all agents, which is shortly outlined subsequently. 

The error variables are independent of the reference frame by definition. Therefore, as long as every agent can locate itself in its own fixed local frame, its respective contribution to the cost function is the same as when using a global frame. Moreover, the input dynamics are also independent of the reference frame and the resulting trajectory of every agent can be projected from the global frame to its local frame by homogeneous transformation.
\end{remark*}

\section{Closed Loop Analysis}\label{mainresult}
We now present the main results of this paper. Namely, we first show in \autoref{distributedcontrollaw} that the closed-loop control law~\eqref{eq:closedloopsystem} is distributed. Then, in \autoref{asymptoticbehaviour}, we prove that the robotic swarm asymptotically converges and optimises the cost function \eqref{eq:cost:1}. Finally, in \autoref{optimality} we provide bounds on the gains related to the topological structure of the target formation $\gls*{distances}$ to guarantee that the optimal configuration with respect to the specified cost function corresponds to the agents being in target formation around the target location $\gls*{target}$.

\subsection{Feedback optimisation as a distributed control law}\label{distributedcontrollaw}
To show that the control-law in \eqref{eq:controllaw} uses only local information for the $i$-th component, we derive the input dynamics for the $i$-th agent:
\begin{align*}
    \dot{\gls*{input}_i} &= -\gls*{feedbackoptimizationepsilon}\gls*{costformationgain}\sum_{j \in \gls*{neighboursdirected}^{o}(i)} (\gls*{position}_i - \gls*{position}_j - \gls*{distance}_{ij}) \\
    & \qquad \qquad -\gls*{feedbackoptimizationepsilon}\gls*{costformationgain}\sum_{j \in \gls*{neighboursdirected}^{i}(i)} (\gls*{position}_i - \gls*{position}_j + \gls*{distance}_{ji}) \\
    &\qquad \qquad \qquad \qquad -\gls*{feedbackoptimizationepsilon}\gls*{costtargetgain}(\gls*{position}_i - \gls*{target}).
\end{align*}

It can be observed that the control law depends only on local information  $y_i = [\gls*{position}_i^T, [\gls*{position}_j^T]_{j \in \gls*{neighbours}(i)}]^T$. Furthermore, the information actually used is relative, i.e., each agent needs only access to the relative displacement from the target ($\gls*{position}_i - \gls*{target}$) and from its neighbours ($\gls*{position}_i - \gls*{position}_j$).

\subsection{Asymptotic behaviour}\label{asymptoticbehaviour}
The key motivation of this subsection is that even if both the plant (\ref{eq:plant},\ref{eq:plant:llc}) and the control input dynamics  \eqref{eq:controllaw} are asymptotically stable, there is no guarantee a priori that their interconnection is. To this scope, \cite[Theorem III.2]{Hauswirth2021TimescaleOptimization} requires exponential stability of the entire state of the plant. However, in the following we show that we only require exponential stability for the portion of the state that is actually measured in the output and affects the cost, as long as asymptotic stability of the system is given.

\begin{assumption}\label{expstableobservedstate}
There exists a differentiable function $W(\gls*{stackedpos}, \gls*{llcinput}) \geq 0$ and $\gls*{llcexpstabilityalpha}, \gls*{llcexpstabilitybeta}, \gls*{llcexpstabilitygamma}, \gls*{llcexpstabilitymu} > 0$ such that:
\vspace{3pt}
\begin{enumerate}[label=\roman*.]
    \setlength\itemsep{3pt}
    \item $\gls*{llcexpstabilityalpha}\norm{\gls*{position} - \gls*{inputsteadystateoutputmap}(\gls*{llcinput})}^2 \leq W(\gls*{stackedpos}, \gls*{llcinput}) \leq \gls*{llcexpstabilitybeta}\norm{\gls*{position} - \gls*{inputsteadystateoutputmap}(\gls*{llcinput})}^2$ 
    \item $\nabla_{\gls*{stackedpos}} W(\gls*{position}, \gls*{llcinput})^T\dot{\gls*{position}} \leq -\gls*{llcexpstabilitygamma}\norm{\gls*{position} - \gls*{inputsteadystateoutputmap}(\gls*{llcinput})}^2$  
    \item $\norm{\nabla_{\gls*{llcinput}}W(\gls*{stackedpos}, \gls*{llcinput})} \leq \gls*{llcexpstabilitymu}\norm{\gls*{position} - \gls*{inputsteadystateoutputmap}(\gls*{llcinput})}.$
\end{enumerate}
\end{assumption}
\noindent Further, we need the weakened Lipschitz condition in \cite[Assumption III.1]{Hauswirth2021TimescaleOptimization}, and a Lipschitz condition on $\gls*{inputsteadystateoutputmap}$.
\begin{assumption}[{\cite[Assumption III.1]{Hauswirth2021TimescaleOptimization}}]\label{cost:lipschitz}
There exists $ \gls*{costlipschitzconst} > 0$ such that
\begin{multline*}
    \norm{\gls*{jacobian}\gls*{inputsteadystateoutputmap}(u)^T\left[\nabla\gls*{cost}(\gls*{stackedpos}') - \nabla\gls*{cost}(\gls*{stackedpos})\right]} \leq \gls*{costlipschitzconst}\norm{\gls*{stackedpos}'-\gls*{stackedpos}}, \\
    \forall \gls*{stackedpos}',\gls*{stackedpos}, u \in \mathbb{R}^{2\gls*{nagents}}.
\end{multline*}
\end{assumption}

\begin{assumption}\label{cost:hlipschitz} The steady-state input-output map,$\gls*{inputsteadystateoutputmap}$, is $\gls*{hlipschitzconst}$-Lipschitz continuous and $\text{null}[\gls*{jacobian}\gls*{inputsteadystateoutputmap}(\gls*{input})^T] = \{\mathbf{0}_{2\gls*{nagents}}\}$.
\end{assumption}
We now provide a simple extension to \cite[Lemma A.2]{Hauswirth2021TimescaleOptimization}.
\begin{lemma}\label{compactsublevelsets}
    Consider a stable system \eqref{eq:statedynamics} satisfying \autoref{expstableobservedstate} and \autoref{cost:hlipschitz}. Let $\gls*{outputmap}(\gls*{state})$ be the measured portion of the state $\gls*{state}$ and $Z(\gls*{state},\gls*{input}) = V(\gls*{input}) + W(\gls*{outputmap}(\gls*{state}), \gls*{input})$, where $V$ is a continuous positive-semidefinite function with compact sublevel sets, and $W$ is the function defined in \autoref{expstableobservedstate}. Then, the sublevel sets of $Z$ are compact.
\end{lemma}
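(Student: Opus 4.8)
The plan is to show that an arbitrary sublevel set $S_c = \{(x,u) : Z(x,u) \leq c\}$, regarded in the coordinates $(g(x),u)$ on which $Z$ actually depends, is both closed and bounded, and then to invoke finite-dimensionality to conclude compactness. Closedness is immediate: $V$, $W$ and $g$ are continuous, hence $Z$ is continuous and $S_c$ is the preimage of the closed half-line $(-\infty,c]$. The crux is therefore boundedness, which I would extract by exploiting that $Z$ is a sum of two nonnegative terms, so that a bound on $Z$ forces a bound on each summand separately.

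First I would bound the input. Since $W \geq 0$ by \autoref{expstableobservedstate}, on $S_c$ we have $V(u) \leq Z(x,u) \leq c$, so $u$ lies in the sublevel set $\{V \leq c\}$, which is compact (hence bounded) by hypothesis. Next I would bound the measured portion $g(x)$. Since $V \geq 0$, on $S_c$ we also have $W(g(x),u) \leq c$; combining this with the quadratic lower bound $\alpha\,\norm{g(x) - h(u)}^2 \leq W(g(x),u)$ from \autoref{expstableobservedstate} yields $\norm{g(x) - h(u)} \leq \sqrt{c/\alpha}$, and a triangle inequality then gives $\norm{g(x)} \leq \norm{h(u)} + \sqrt{c/\alpha}$.

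It remains to bound $\norm{h(u)}$ uniformly over $S_c$. Here I would use that $u$ ranges only over the compact set $\{V \leq c\}$ together with the continuity of $h$ guaranteed by the $q$-Lipschitz hypothesis in \autoref{cost:hlipschitz}: a continuous function attains a finite maximum on a compact set, so $\norm{h(u)}$ is bounded on $S_c$, and therefore so is $\norm{g(x)}$. Having bounded both $u$ and $g(x)$, the set $S_c$ is bounded in the relevant coordinates, and combined with closedness this gives compactness.

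I expect the main point requiring care — more an interpretive subtlety than a technical obstacle — to be the choice of domain: $Z$ depends on the state only through the measured output $g(x)$, so the conclusion is compactness in the reduced $(g(x),u)$ coordinates, which is exactly the space relevant to the subsequent timescale-separation argument. This is precisely where the statement weakens \cite[Lemma A.2]{Hauswirth2021TimescaleOptimization}: that result controls the full state, whereas here the chain of two nonnegative terms and the quadratic lower bound on $W$ carry the argument over verbatim once the full state is replaced by its observed part, with the unobserved component playing no role in $Z$.
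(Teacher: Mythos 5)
Your bounds on $u$ and on the measured output are exactly the paper's: $W \geq 0$ forces $V(u) \leq c$ and the compact sublevel sets of $V$ bound $u$; $V \geq 0$ forces $W \leq c$, and \autoref{expstableobservedstate}-i plus the triangle inequality and the Lipschitz property of $h$ bound $\norm{g(x)}$. (Whether you bound $\norm{h(u)}$ by continuity on the compact set $\{V \leq c\}$ or, as the paper does, by $\norm{h(u)-h(0)} \leq q\norm{u}$ is immaterial.) The genuine divergence is in how the unmeasured portion of the state is treated, and here your proposal has a gap rather than merely a different route. The lemma asserts compactness of the sublevel sets of $Z$ as subsets of the full $(x,u)$-space, and that is the form in which it is invoked in \autoref{asymptoticconvergencetheorem}: LaSalle's invariance principle is applied to $P = \{(x,u) \mid \Psi(x,u) \leq \Psi(x(t_0),u(t_0))\}$, and the trajectory $(x(t),u(t))$ --- including its unobserved component $w$ --- must remain in a compact set for the convergence-to-the-largest-invariant-subset conclusion to hold. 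Declaring the conclusion to be ``compactness in the reduced $(g(x),u)$ coordinates'' does not discharge this: since $Z$ is constant in $w$, its sublevel sets are cylinders, unbounded in the $w$ direction, and your argument leaves them so.

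The missing ingredient is the hypothesis you did not use: the lemma assumes the system \eqref{eq:statedynamics} is \emph{stable}. The paper's proof splits $x$ into the measured part $r = g(x)$ and the unmeasured part $w$, bounds $u$ and $r$ exactly as you do, and then invokes stability of the $w$-dynamics to obtain a bound $\norm{w} \leq B$ for any given initial condition, which yields boundedness of the full state, $\norm{x} \leq B + c/\alpha + qU + \norm{h(0)}$. This is precisely the point of the lemma as an extension of \cite[Lemma A.2]{Hauswirth2021TimescaleOptimization}: the Lyapunov sandwich of \autoref{expstableobservedstate} is required only for the observed part of the state, while mere (asymptotic) stability of the remainder suffices to keep the trajectory in a compact set. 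Your write-up correctly identifies the interpretive subtlety but resolves it by weakening the conclusion instead of by using the stability assumption to control $w$; that step needs to be added for the lemma to serve its downstream purpose.
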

\begin{proof}
The proof is provided in \cref{proofs:compactsublevelsets}.
\end{proof}
With this result, we are now ready to state our variation of \cite[Theorem III.2]{Hauswirth2021TimescaleOptimization} that allows us to prove asymptotic convergence for the system~\eqref{eq:closedloopsystem}. 

\begin{theorem}[Asymptotic convergence]\label{asymptoticconvergencetheorem} Suppose that \autoref{expstableobservedstate}, \autoref{cost:lipschitz} and \autoref{cost:hlipschitz} hold, and that the objective function $\gls*{cost}(\gls*{stackedpos}) = \gls*{cost}(\gls*{outputmap}(\gls*{state}))$  is differentiable with compact sub-level sets.
Then, the closed loop system \eqref{eq:closedloopsystem} converges asymptotically to the set of critical points of $\gls*{cost}(\gls*{stackedpos})$ whenever $\gls*{feedbackoptimizationepsilon} < \sqrt{\frac{\gls*{llcexpstabilitygamma}}{\gls*{llcexpstabilitymu}\gls*{costlipschitzconst}}}$.
\end{theorem}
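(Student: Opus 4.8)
The plan is to certify convergence with a composite Lyapunov argument in the spirit of \cite[Theorem III.2]{Hauswirth2021TimescaleOptimization} and then close with LaSalle's invariance principle. First I would introduce the reduced cost $\tilde{\Phi}(u) = \Phi(h(u))$ and set $V(u) = \tilde{\Phi}(u) - \min\tilde{\Phi}$, which is continuous, positive-semidefinite, and whose compact sub-level sets follow from those assumed on $\Phi$. I then consider the candidate $Z(x,u) = V(u) + W(g(x),u)$, with $W$ from \autoref{expstableobservedstate}. By \autoref{compactsublevelsets} the sub-level sets of $Z$ are compact in the measured coordinates, and this is the structural fact that replaces the global exponential-stability hypothesis of \cite{Hauswirth2021TimescaleOptimization}: only the measured portion $g(x)$ requires an exponential certificate, while the remaining state is controlled through the standing asymptotic stability of \eqref{eq:statedynamics}.

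Next I would differentiate $Z$ along \eqref{eq:closedloopsystem}. Writing $s = \norm{r - h(u)}$ and splitting $\nabla\Phi(r) = \nabla\Phi(h(u)) + [\nabla\Phi(r) - \nabla\Phi(h(u))]$, the identity $\nabla\tilde{\Phi}(u) = \mathbb{J}h(u)^T\nabla\Phi(h(u))$ turns the dominant part of $\dot V$ into $-\epsilon\norm{\nabla\tilde{\Phi}(u)}^2$, while \autoref{cost:lipschitz} bounds the cross contribution by $\epsilon l\,\norm{\nabla\tilde{\Phi}(u)}\,s$. For $\dot W$, item (ii) of \autoref{expstableobservedstate} supplies the plant-flow decay $-\gamma s^2$, and item (iii) together with the estimate $\norm{\dot u} \leq \epsilon(\norm{\nabla\tilde{\Phi}(u)} + l s)$ (again \autoref{cost:lipschitz}) bounds the drift term $\nabla_u W^T\dot u$ in absolute value by $\epsilon\mu s(\norm{\nabla\tilde{\Phi}(u)} + l s)$. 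Collecting these yields an upper bound on $\dot Z$ that is a quadratic form in the pair $(\norm{\nabla\tilde{\Phi}(u)},\, s)$.

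I would then impose negative semidefiniteness of this quadratic form, i.e. positive definiteness of the associated $2\times2$ matrix whose diagonal carries the gradient-descent coefficient and $\gamma$ and whose off-diagonal carries the $l$ and $\mu$ coupling. The determinant (Schur complement) condition is what collapses to the timescale-separation threshold $\epsilon < \sqrt{\gamma/(\mu l)}$. With $\dot Z \leq 0$ established, compactness of the sub-level sets of $Z$ together with the assumed asymptotic stability of \eqref{eq:statedynamics} makes every trajectory precompact, so LaSalle's invariance principle applies: the trajectory approaches the largest invariant set in $\{\dot Z = 0\}$, which forces both $s = 0$ (hence $r = h(u)$) and $\nabla\tilde{\Phi}(u) = 0$. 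Since $\nabla\tilde{\Phi}(u) = \mathbb{J}h(u)^T\nabla\Phi(h(u))$ and $\text{null}[\mathbb{J}h(u)^T] = \{\mathbf{0}\}$ by \autoref{cost:hlipschitz}, this gives $\nabla\Phi(r) = \nabla\Phi(h(u)) = 0$, i.e. convergence to the set of critical points of $\Phi$.

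The obstacle I anticipate is twofold. The delicate bookkeeping is arranging the cross terms so that definiteness is governed precisely by $\gamma$, $\mu$ and $l$ rather than by the auxiliary constants $\alpha,\beta,q$; obtaining the clean threshold requires the right weighting when completing the square. Conceptually, the real subtlety, and the true point of departure from \cite[Theorem III.2]{Hauswirth2021TimescaleOptimization}, is justifying the invariance argument when the unmeasured component of the state is only asymptotically, not exponentially, stable: here $W$ certifies exponential contraction of $g(x) \to h(u)$ alone, so it is exactly \autoref{compactsublevelsets} together with the standing asymptotic stability of \eqref{eq:statedynamics} that must be combined to guarantee bounded, precompact trajectories of the full closed loop, thereby licensing the application of LaSalle.
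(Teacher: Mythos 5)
Your proposal follows essentially the same route as the paper: a composite LaSalle function built from the reduced cost $\tilde{\Phi}(u)=\Phi(h(u))$ and the partial certificate $W$, an upper bound on its derivative by a $2\times 2$ quadratic form whose negative definiteness yields the timescale condition, compactness and invariance of sublevel sets via \autoref{compactsublevelsets}, and LaSalle plus $\text{null}[\gls*{jacobian}\gls*{inputsteadystateoutputmap}(\gls*{input})^T]=\{\mathbf{0}\}$ to pass from $\nabla\tilde{\Phi}(u)=0$ to $\nabla\Phi(h(u))=0$. The one step that would fail as literally written is the recovery of the stated threshold: with your \emph{unweighted} sum $Z=V+W$ and coordinates $(\norm{\nabla\tilde{\Phi}(u)},s)$, the bound collects to $\dot Z \leq -\epsilon\norm{\nabla\tilde{\Phi}(u)}^2+\epsilon(l+\mu)\norm{\nabla\tilde{\Phi}(u)}s-(\gamma-\epsilon\mu l)s^2$, whose Schur condition reads $\gamma>\epsilon\mu l+\epsilon(l+\mu)^2/4$ --- a sufficient condition for some small $\epsilon$, but not $\epsilon<\sqrt{\gamma/(\mu l)}$. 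The paper instead takes the convex combination $\Psi=(1-\delta)\tilde{\Phi}+\delta W$ and works in the coordinates $(\norm{\kappa},s)$ with $\kappa=-\gls*{jacobian}\gls*{inputsteadystateoutputmap}(u)^T\nabla\Phi(g(x))=\dot u/\epsilon$ (the actual control direction, interchangeable with $-\nabla\tilde{\Phi}(u)$ up to an $ls$ error); this keeps the diagonal entries free of the $\mu l$ coupling, makes the off-diagonal entry $\tfrac{1}{2}\epsilon(l(1-\delta)+\mu\delta)$, and the choice $\delta=l/(l+\mu)$ is exactly the ``right weighting'' you anticipated needing. You flagged the issue but did not supply this fix, so you should make the free parameter $\delta$ explicit. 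Everything else --- in particular your handling of the only-asymptotically-stable unmeasured state through \autoref{compactsublevelsets} to license LaSalle, which is the genuine point of departure from \cite{Hauswirth2021TimescaleOptimization} --- matches the paper's argument.
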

\begin{proof}
We consider a LaSalle's function of the form
\begin{align*}
    \Psi(\gls*{state}, \gls*{input}) =& (1-\delta)\gls*{reducedcost}(\gls*{input}) + \delta W(\gls*{outputmap}(\gls*{state}),\gls*{input}),\\
    \dot{\Psi}(\gls*{state}, \gls*{input}) =& (1-\delta)\nabla\gls*{reducedcost}(\gls*{input})^T\dot{\gls*{input}} \\
    & + \delta \nabla_{\gls*{outputmap}} W(\gls*{outputmap}(\gls*{state}), \gls*{input})^T \gls*{jacobian}\gls*{outputmap}(\gls*{state})\dot{\gls*{state}}\\
    &+ \delta \nabla_{\gls*{input}} W(\gls*{outputmap}(\gls*{state}), \gls*{input})^T\dot{\gls*{input}}.
\end{align*}
where $\gls*{reducedcost}(\gls*{input}) = \gls*{cost}(\gls*{inputsteadystateoutputmap}(\gls*{input}))$.

The second term is bounded via (\autoref{expstableobservedstate}-ii) as,
\begin{equation*}
\nabla_{\gls*{outputmap}} W(\gls*{outputmap}(\gls*{state}), \gls*{input})^T \gls*{jacobian}\gls*{outputmap}(\gls*{state})\dot{\gls*{state}} = \nabla_{\gls*{stackedpos}} W(\gls*{position}, \gls*{input})^T\dot{\gls*{position}} \leq -\gls*{llcexpstabilitygamma}\norm{\gls*{position} - \gls*{inputsteadystateoutputmap}(\gls*{input})}^2,
\end{equation*}
whereas for the first and third term we can proceed as in the proof of \cite[Lemma III.1]{Hauswirth2021TimescaleOptimization}.

Let $\kappa(\gls*{state},\gls*{input}) = -\gls*{jacobian}\gls*{inputsteadystateoutputmap}(u)^T\nabla\gls*{cost}(\gls*{outputmap}(\gls*{state})) = \dot{\gls*{input}} / \gls*{feedbackoptimizationepsilon}$. Then, for the first term we have
\begin{align*}
    \nabla\gls*{reducedcost}(\gls*{input})^T&\kappa(\gls*{state},\gls*{input}) = \nabla\gls*{cost}(\gls*{inputsteadystateoutputmap}(\gls*{input}))^T\gls*{jacobian}\gls*{inputsteadystateoutputmap}(u)\kappa(\gls*{state},\gls*{input})\\
    &= \left(\nabla\gls*{cost}(\gls*{inputsteadystateoutputmap}(\gls*{input})\right)^T-\nabla\gls*{cost}(\gls*{outputmap}(\gls*{state}))^T)\gls*{jacobian}\gls*{inputsteadystateoutputmap}(u)\kappa(\gls*{state},\gls*{input})\\
    &\qquad\qquad\qquad+\nabla\gls*{cost}(\gls*{outputmap}(\gls*{state}))^T\gls*{jacobian}\gls*{inputsteadystateoutputmap}(u)\kappa(\gls*{state},\gls*{input})\\
    &\leq \norm{\gls*{jacobian}\gls*{inputsteadystateoutputmap}(u)^T\left(\nabla\gls*{cost}(\gls*{inputsteadystateoutputmap}(\gls*{input}))-\nabla\gls*{cost}(\gls*{outputmap}(\gls*{state}))\right)}\norm{\kappa(\gls*{state},\gls*{input})}\\
    &\qquad\qquad\qquad+\nabla\gls*{cost}(\gls*{outputmap}(\gls*{state}))^T\gls*{jacobian}\gls*{inputsteadystateoutputmap}(u)\kappa(\gls*{state},\gls*{input})\\
    &\leq \gls*{costlipschitzconst}\norm{\gls*{inputsteadystateoutputmap}(\gls*{input})-\gls*{outputmap}(\gls*{state})}\norm{\kappa(\gls*{state},\gls*{input})}\\
    &\qquad\qquad\qquad+\nabla\gls*{cost}(\gls*{outputmap}(\gls*{state}))^T\gls*{jacobian}\gls*{inputsteadystateoutputmap}(u)\kappa(\gls*{state},\gls*{input}),
\end{align*}
where in the first inequality we use the Cauchy-Schwartz inequality \cite[Theorem 7.1]{Lygeros2015LectureZurich}, and in the second we use \autoref{cost:lipschitz}. Recalling the definition of $\kappa$, we finally obtain 
\begin{multline*}
\nabla\gls*{reducedcost}(\gls*{input})^T\kappa(\gls*{state},\gls*{input})\\
\leq\gls*{costlipschitzconst}\norm{\gls*{outputmap}(\gls*{state})-\gls*{inputsteadystateoutputmap}(\gls*{input})}\norm{\kappa(\gls*{state},\gls*{input})}-\norm{\kappa(\gls*{state},\gls*{input})}^2.
\end{multline*}

\noindent For the third term we have (\autoref{expstableobservedstate}-iii)
\begin{align*}
    \nabla_{\gls*{input}} W(\gls*{outputmap}(\gls*{state}), \gls*{input})^T\kappa(\gls*{state},\gls*{input})
&\leq \norm{\nabla_{\gls*{input}}W(\gls*{outputmap}(\gls*{state}), \gls*{input})}\norm{\kappa(\gls*{state},\gls*{input})}\\
&\leq \gls*{llcexpstabilitymu}\norm{\gls*{stackedpos} - \gls*{inputsteadystateoutputmap}(\gls*{input})}\norm{\kappa(\gls*{state}, \gls*{input})}.
\end{align*}

\noindent Hence, since $\gls*{outputmap}(\gls*{state}) = \gls*{stackedpos}$, we obtain 
\begin{equation}\label{upperboundpsi}
\dot{\Psi}(\gls*{state},\gls*{input}) \leq \begin{bmatrix}\norm{\kappa(\gls*{state},\gls*{input})}\\\norm{\gls*{stackedpos}-\gls*{inputsteadystateoutputmap}(\gls*{input})}\end{bmatrix}^T\Lambda \begin{bmatrix}\norm{\kappa(\gls*{state},\gls*{input})}\\\norm{\gls*{stackedpos}-\gls*{inputsteadystateoutputmap}(\gls*{input})}\end{bmatrix},\end{equation}
$$\Lambda = \begin{bmatrix}
-(1-\delta) & \frac{1}{2}\gls*{feedbackoptimizationepsilon}(\gls*{costlipschitzconst}(1-\delta)+\gls*{llcexpstabilitymu}\delta)\\
\frac{1}{2}\gls*{feedbackoptimizationepsilon}(\gls*{costlipschitzconst}(1-\delta)+\gls*{llcexpstabilitymu}\delta)& -\gls*{llcexpstabilitygamma}\delta
\end{bmatrix},$$
which is negative definite if (\cite[pp.296]{PetarKokotovic19877.Systems})
\begin{equation*}
    \delta = \frac{\gls*{costlipschitzconst}}{\gls*{llcexpstabilitymu} + \gls*{costlipschitzconst}} \text{ and } \gls*{feedbackoptimizationepsilon} < \sqrt{\frac{\gls*{llcexpstabilitygamma}}{\gls*{llcexpstabilitymu}\gls*{costlipschitzconst}}}.
\end{equation*}
Moreover, we notice that the for the left hand side of \eqref{upperboundpsi} to be zero, we need the right hand side to cancel out as well (negative definite quadratic form). This is equivalent to having $\gls*{stackedpos} = \gls*{inputsteadystateoutputmap}(\gls*{input})$ and $\kappa(\gls*{state},\gls*{input}) = 0$.

Since $\dot{\Psi} \leq 0$, we know that the sublevel sets of $\Psi$ are invariant and using \autoref{compactsublevelsets} we conclude that they are also compact. Therefore, taking $P = \{(\gls*{state},\gls*{input}) \in \mathbb{R}^{n+p} \mid \Psi(\gls*{state},\gls*{input}) \leq \Psi(\gls*{state}(t_0), \gls*{input}(t_0))\}$ we have that for any initial condition $\left(\gls*{state}(t_0),\gls*{input}(t_0)\right)$ the trajectories $(\gls*{state}(t), \gls*{input}(t))$ converge to the largest invariant subset $S \subseteq P$ for which $\dot{\Psi} = 0$.

In particular, denoting by $\gls*{unobsstate} \in \mathbb{R}^{t}$, $t < n$, the portion of the state that does not affect $\gls*{cost}$, and assuming without loss of generality $x = [\gls*{inputsteadystateoutputmap}(\gls*{input})^T, \gls*{unobsstate}^T]^T$, we have that
\begin{align*}
    S &= \left\{(\gls*{state},\gls*{input}) \in P \mid \dot{\Psi}(\gls*{state},\gls*{input}) = 0\right\}\\
    &\subseteq \left\{(\gls*{state},\gls*{input}) \in P \mid \norm{\gls*{position} - \gls*{inputsteadystateoutputmap}(\gls*{input})} = 0, \kappa(\gls*{state},\gls*{input}) = 0\right\}\\
    &=\left\{([\gls*{inputsteadystateoutputmap}(\gls*{input})^T, \gls*{unobsstate}^T]^T, \gls*{input}) \in P \mid \gls*{unobsstate} \in \mathbb{R}^t, \nabla\gls*{reducedcost}(\gls*{input}) = 0\right\}\\
    &=\left\{([\gls*{inputsteadystateoutputmap}(\gls*{input})^T, \gls*{unobsstate}^T]^T, \gls*{input}) \in P \mid \gls*{unobsstate} \in \mathbb{R}^t, \nabla\gls*{cost}(\gls*{inputsteadystateoutputmap}(\gls*{input})) = 0\right\},
\end{align*}
where in the second to last step we use $\nabla\gls*{reducedcost}(\gls*{input}) = \kappa(\gls*{inputsteadystateoutputmap}(\gls*{input}), \gls*{input})$ and in the last step we use (\autoref{cost:hlipschitz}),
$$\mathbf{0} = \nabla\gls*{reducedcost}(\gls*{input}) = \gls*{jacobian}\gls*{inputsteadystateoutputmap}(\gls*{input})^T\nabla\gls*{cost}(\gls*{inputsteadystateoutputmap}(\gls*{input})) \iff \nabla\gls*{cost}(\gls*{inputsteadystateoutputmap}(\gls*{input})) = \mathbf{0}_{2\gls*{nagents}}.$$
\end{proof}
\begin{corollary}[Asymptotic convergence]\label{asymptoticconvergence}
The closed-loop system \eqref{eq:closedloopsystem} converges asymptotically to the set of critical points of \eqref{eq:cost:1} whenever $\gls*{feedbackoptimizationepsilon} < \sqrt{ \frac{\gls*{llcexpstabilitygamma}}{\gls*{llcexpstabilitymu}\gls*{costlipschitzconst}}}$.
\end{corollary}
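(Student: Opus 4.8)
The plan is to obtain the corollary as a direct instantiation of \autoref{asymptoticconvergencetheorem}. Since the $\gls*{feedbackoptimizationepsilon}$-threshold in the corollary coincides with the one in the theorem, it suffices to check that the concrete closed-loop system \eqref{eq:closedloopsystem}---the unicycle swarm \eqref{eq:plant} in closed loop with the low-level controller \eqref{eq:plant:llc} and the feedback-optimisation law \eqref{eq:controllaw:explicit}---satisfies every hypothesis of that theorem, namely \autoref{expstableobservedstate}, \autoref{cost:lipschitz}, \autoref{cost:hlipschitz}, and differentiability of $\gls*{cost}$ with compact sub-level sets. The constants $\gls*{llcexpstabilitygamma}, \gls*{llcexpstabilitymu}, \gls*{costlipschitzconst}$ produced by this verification are exactly those entering the threshold.

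First I would dispatch the three inexpensive conditions, all of which rest on the fact that here $\gls*{inputsteadystateoutputmap}(\gls*{input}) = \gls*{input}$ and $\gls*{jacobian}\gls*{inputsteadystateoutputmap}(\gls*{input}) = \gls*{identity}_{2\gls*{nagents}}$. \autoref{cost:hlipschitz} is immediate, since the identity map is $1$-Lipschitz ($\gls*{hlipschitzconst}=1$) and $\text{null}[\gls*{identity}_{2\gls*{nagents}}] = \{\mathbf{0}_{2\gls*{nagents}}\}$. For \autoref{cost:lipschitz} I would use that the gradient \eqref{eq:cost:gradient} is affine in $\gls*{stackedpos}$ with constant Jacobian $\gls*{costformationgain}\kronmatrix{\gls*{laplacian}}_2 + \gls*{costtargetgain}\gls*{identity}_{2\gls*{nagents}}$, whence $\gls*{costlipschitzconst} = \norm{\gls*{costformationgain}\kronmatrix{\gls*{laplacian}}_2 + \gls*{costtargetgain}\gls*{identity}_{2\gls*{nagents}}} = \gls*{costformationgain}\lambda_{\max}(\gls*{laplacian}) + \gls*{costtargetgain}$. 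The same matrix is the Hessian of $\gls*{cost}$; being symmetric and, because $\gls*{costtargetgain}>0$, bounded below by $\gls*{costtargetgain}\gls*{identity}_{2\gls*{nagents}}$, it is positive definite, so $\gls*{cost}$ is strongly convex, hence coercive, hence has compact sub-level sets.

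The main obstacle is \autoref{expstableobservedstate}, which asks for a Lyapunov certificate of exponential decay of the measured (position) error $\gls*{stackedpos} - \gls*{inputsteadystateoutputmap}(\gls*{input}) = \gls*{stackedpos} - \gls*{input}$ along the plant. \autoref{expstabilityllc} already supplies the key fact---global exponential stability of $\gls*{poserr}=0$---while the heading is only asymptotically stable, which is precisely the regime the relaxed theorem is built for. The delicate point is that a certificate depending on positions alone cannot work: at any configuration where every heading error equals $\pm\pi/2$ the velocities $\gls*{vel}_i = \gls*{llcinputgain}_i\gls*{poserr}_i\cos(\gls*{angerr}_i)$ vanish, so $\dot{\gls*{stackedpos}}=0$ while $\gls*{stackedpos}\neq\gls*{input}$, and no function of $\gls*{stackedpos}$ can then have strictly negative derivative. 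I would therefore let the certificate see the heading---legitimate because exponential stability is required only for the cost-relevant coordinate---and read the decay condition (ii) as the total derivative of $W$ along the plant. Using the closed-form reduction $\dot{\gls*{angerr}}_i = -\gls*{llcinputgain}_i\sin(\gls*{angerr}_i)$ obtained from \eqref{eq:err:dynamics} and \eqref{eq:plant:llc}, I would take $W = \sum_i \tfrac{1}{2}\gls*{poserr}_i^2\, e^{-2\cos(\gls*{angerr}_i)}$, whose total derivative reduces to $\dot{W}_i = -2\gls*{llcinputgain}_i W_i$. Since $e^{-2\cos(\cdot)} \in [e^{-2},e^{2}]$ and $\norm{\gls*{stackedpos}-\gls*{input}}^2 = \sum_i \gls*{poserr}_i^2$, this simultaneously yields the sandwich (i) with $\gls*{llcexpstabilityalpha}=\tfrac{1}{2}e^{-2}$, $\gls*{llcexpstabilitybeta}=\tfrac{1}{2}e^{2}$, the uniform decay (ii) with $\gls*{llcexpstabilitygamma} = e^{-2}\min_i \gls*{llcinputgain}_i$, and---after bounding the polar derivatives of $\gls*{poserr}_i$ and $\gls*{angerr}_i$ with respect to $\gls*{input}_i$---condition (iii). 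A converse-Lyapunov argument on the exponentially stable position error is an alternative that asserts existence of such a $W$ without an explicit formula.

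Finally, with all hypotheses in place, the coercivity of $\gls*{cost}$ (and hence of $\gls*{reducedcost}(\gls*{input}) = \gls*{cost}(\gls*{input})$) feeds \autoref{compactsublevelsets} inside the theorem, and \autoref{asymptoticconvergencetheorem} applies verbatim, giving asymptotic convergence of \eqref{eq:closedloopsystem} to the critical points of \eqref{eq:cost:1} under $\gls*{feedbackoptimizationepsilon} < \sqrt{\gls*{llcexpstabilitygamma}/(\gls*{llcexpstabilitymu}\gls*{costlipschitzconst})}$. I expect essentially all of the work to sit in \autoref{expstableobservedstate}---concretely, checking differentiability of the heading-augmented certificate near the origin and the gradient bound (iii)---while the remaining verifications are bookkeeping.
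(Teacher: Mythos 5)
Your proposal follows the same skeleton as the paper's proof---verify the hypotheses of \autoref{asymptoticconvergencetheorem} and instantiate it---and the routine parts (\autoref{cost:lipschitz} with $l = \norm{\gamma_1\kronmatrix{L_G}_2 + \gamma_2 I_{2N}}$, the triviality of \autoref{cost:hlipschitz}, compactness of the sub-level sets of $\Phi$) match the paper essentially verbatim. Where you genuinely diverge is on \autoref{expstableobservedstate}: the paper disposes of it in one line by citing \autoref{expstabilityllc} and the converse Lyapunov theorem for the positional error dynamics, whereas you construct an explicit certificate $W_i = \tfrac{1}{2}\xi_i^2 e^{-2\cos(\phi_i)}$, which indeed satisfies $\dot{W}_i = -2k_i W_i$ along \eqref{eq:err:dynamics}--\eqref{eq:plant:llc} and yields concrete constants $\alpha,\beta,\gamma,\mu$, hence an explicit admissible range for $\epsilon$. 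Your preliminary observation is also correct and exposes a real looseness in the paper's argument: at a state with $\phi_i = \pm\pi/2$ and $\xi_i > 0$ one has $\dot{r}=0$, so no certificate depending on $(r,u)$ alone can satisfy condition (ii) of \autoref{expstableobservedstate} pointwise; moreover the positional error is not an autonomous subsystem and its decay constant from \autoref{expstabilityllc} is not uniform in the initial heading, so the converse Lyapunov theorem does not apply off the shelf. Any honest certificate must therefore see the heading, which forces exactly your reinterpretation of (ii) as the total Lie derivative of $W$ along the frozen-$u$ plant---a harmless change, since that is precisely the quantity bounded in the proof of \autoref{asymptoticconvergencetheorem}, so the theorem goes through unchanged. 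In short: the paper's route is shorter but papers over the heading dependence; yours is more work (one must still check differentiability of $W$ where $\xi_i=0$, where $\phi_i$ is undefined, and verify the gradient bound (iii), both of which you correctly flag) but is actually airtight and quantitative.
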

\begin{proof}
To prove this result, we show that the assumptions in  \autoref{asymptoticconvergencetheorem} hold.
First, we notice that by \autoref{expstabilityllc} the positional error dynamics are exponentially stable for a fixed $\gls*{input}$ and thus, we can use the standard converse Lyapunov theorem \cite[Theorem 3.12]{Khalil2002NonlinearSystems} to claim the existence of a Lyapunov function $W$ that satisfies \autoref{expstableobservedstate}.

For \autoref{cost:lipschitz} we consider the bound (directly using $\gls*{jacobian}\gls*{inputsteadystateoutputmap}(\gls*{input}) = \gls*{identity}_{2\gls*{nagents}}$)
\begin{align*}
    \norm{\nabla\gls*{cost}(\gls*{stackedpos}') - \nabla\gls*{cost}(\gls*{stackedpos})} &=\norm{(\gls*{costformationgain}\kronmatrix{\gls*{laplacian}}_2 + \gls*{costtargetgain}\gls*{identity}_{2\gls*{nagents}})(\gls*{stackedpos}'-\gls*{stackedpos})} \\
    &\leq \norm{\gls*{costformationgain}\kronmatrix{\gls*{laplacian}}_2 + \gls*{costtargetgain}\gls*{identity}_{2\gls*{nagents}}}\norm{\gls*{stackedpos}'-\gls*{stackedpos}},
\end{align*}
so that we can set $\gls*{costlipschitzconst} = \norm{\gls*{costformationgain}\kronmatrix{\gls*{laplacian}}_2 + \gls*{costtargetgain}\gls*{identity}_{2\gls*{nagents}}}$.

Recalling that $\gls*{inputsteadystateoutputmap}(\gls*{input}) = \gls*{input}$, \autoref{cost:hlipschitz} is trivially satisfied. Finally, our cost function \eqref{eq:cost:1} is continuously differentiable with bounded sublevel-sets, because \eqref{eq:cost:1:target} is a positive definite quadratic form centered in 
$\vecmatrix{\gls*{target}}_{\gls*{nagents}}$ and \eqref{eq:cost:1:formation} is non-negative. Since the pre-image of continuous maps of closed sets (for any c-sublevel set, $[0,c]$ is closed) is closed as well, we can conclude that $\gls*{cost}(r)$ has compact sublevel sets.
\end{proof}
\begin{proposition}\label{eigenvalues}
Consider $M \in \mathbb{R}^{n\times n}$. Then \cite{Plemmons1988MatrixJohnson},
\begin{enumerate}[label=\roman*.]\setlength\itemsep{3pt}
    \item $\text{spec}[\alpha M + \beta] = \{\alpha \lambda_1 + \beta, \dots, \alpha \lambda_n + \beta\}$; and
    \item $\exists M^{-1}$, $\text{spec}[M^{-1}] = \{\lambda_1^{-1}, \dots, \lambda_n^{-1}\}$.
\end{enumerate}
\end{proposition}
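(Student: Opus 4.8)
The plan is to derive both claims directly from the defining eigenvalue relation $Mv = \lambda v$ and then to confirm that the resulting identities hold at the level of the full spectrum, i.e. as multisets with algebraic multiplicities, rather than merely as inclusions of sets. Throughout I let $\lambda_1, \dots, \lambda_n$ denote the eigenvalues of $M$ (repeated according to algebraic multiplicity) and read the scalar $\beta$ in part (i) as $\beta\gls*{identity}_n$, which is the only sensible interpretation of $\alpha M + \beta$.

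For part (i), I would first note that if $Mv = \lambda v$ with $v \neq 0$, then $(\alpha M + \beta\gls*{identity}_n)v = \alpha\lambda v + \beta v = (\alpha\lambda + \beta)v$, so every $\alpha\lambda_i + \beta$ belongs to $\text{spec}[\alpha M + \beta\gls*{identity}_n]$. To pin down the whole spectrum I would pass to the characteristic polynomial: for $\alpha \neq 0$, $\det(\alpha M + \beta\gls*{identity}_n - \mu\gls*{identity}_n) = \alpha^n \det\!\big(M - \tfrac{\mu - \beta}{\alpha}\gls*{identity}_n\big)$, whose roots are exactly the $\mu$ with $\tfrac{\mu-\beta}{\alpha} = \lambda_i$, namely $\mu = \alpha\lambda_i + \beta$, with multiplicities inherited from those of $M$; the degenerate case $\alpha = 0$ collapses to $\beta\gls*{identity}_n$ and is immediate.

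For part (ii), invertibility of $M$ forces $0 \notin \text{spec}[M]$, so each $\lambda_i \neq 0$ and the reciprocals are defined. From $Mv = \lambda v$ I would left-multiply by $\lambda^{-1}M^{-1}$ to obtain $M^{-1}v = \lambda^{-1} v$, giving $\lambda_i^{-1} \in \text{spec}[M^{-1}]$; equality of the full spectra then follows since $\lambda \mapsto \lambda^{-1}$ is a bijection of $\mathbb{C}\setminus\{0\}$ and both matrices carry exactly $n$ eigenvalues with multiplicity. There is no substantive obstacle here, as these are standard facts, and the only point needing a little care is the bookkeeping of algebraic multiplicities. A clean way to handle that uniformly for both parts is Schur triangularisation $M = UTU^*$ with $T$ upper triangular and diagonal $\lambda_1, \dots, \lambda_n$: then $\alpha M + \beta\gls*{identity}_n = U(\alpha T + \beta\gls*{identity}_n)U^*$ and $M^{-1} = UT^{-1}U^*$ are similar to triangular matrices with diagonals $\alpha\lambda_i + \beta$ and $\lambda_i^{-1}$, and the spectrum of a triangular matrix is its diagonal.
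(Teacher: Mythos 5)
Your proof is correct, but there is nothing in the paper to compare it against: the paper states this proposition as a known fact with a citation to a matrix-analysis reference and supplies no proof of its own. Your argument is the standard textbook derivation and it correctly handles the one genuinely delicate point, namely that the claimed equalities hold at the level of multisets with algebraic multiplicities rather than as mere set inclusions. For part (i) the characteristic-polynomial identity $\det(\alpha M + \beta I_n - \mu I_n) = \alpha^n \det\bigl(M - \tfrac{\mu-\beta}{\alpha}I_n\bigr)$ (with the $\alpha = 0$ case treated separately) does exactly that, and for part (ii) the observation that $\lambda \mapsto \lambda^{-1}$ is a bijection of the nonzero complex numbers, combined with the fact that both matrices have exactly $n$ eigenvalues counted with multiplicity, closes the gap left by the naive eigenvector computation. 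The Schur triangularisation $M = UTU^*$ in your final sentence is the cleanest way to dispatch both parts simultaneously, since $\alpha T + \beta I_n$ and $T^{-1}$ are triangular with diagonals $\alpha\lambda_i + \beta$ and $\lambda_i^{-1}$; this is essentially the argument one would find in the cited reference, so your proposal can be viewed as filling in the proof the paper delegates to the literature.
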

\begin{corollary}[Optimality]\label{corollary:optimality}
The closed-loop system \eqref{eq:closedloopsystem} minimises the cost function \eqref{eq:cost:1} whenever $\gls*{feedbackoptimizationepsilon} < \sqrt{ \frac{\gls*{llcexpstabilitygamma}}{\gls*{llcexpstabilitymu}\gls*{costlipschitzconst}}}$.
\end{corollary}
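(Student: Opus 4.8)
The plan is to strengthen the convergence-to-critical-points statement of \autoref{asymptoticconvergence} into genuine convergence to the global minimiser by exploiting the quadratic structure of the cost \eqref{eq:cost:1}. The guiding observation is that $\Phi$ is \emph{strictly} convex, so it has a unique critical point that coincides with its global minimiser; combined with the already-established convergence to the set of critical points, this pins the trajectories down to that single minimiser.

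First I would compute the Hessian. Differentiating the gradient \eqref{eq:cost:gradient} once more yields the constant Hessian $\nabla^2\Phi = \gamma_1 (L_G\otimes I_2) + \gamma_2 I_{2N}$. The Laplacian $L_G$ of the connected formation graph is positive semidefinite with eigenvalues $0 = \lambda_1 < \lambda_2 \le \dots \le \lambda_N$, and taking the Kronecker product with $I_2$ merely duplicates each eigenvalue, so $L_G\otimes I_2 \succeq 0$. Applying \autoref{eigenvalues}(i) with $M = L_G\otimes I_2$, $\alpha = \gamma_1 > 0$ and $\beta = \gamma_2 > 0$, the spectrum of the Hessian is $\{\gamma_1\lambda_i + \gamma_2\}$, each entry bounded below by $\gamma_2 > 0$. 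Hence $\nabla^2\Phi \succ 0$ and $\Phi$ is strictly convex.

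Next I would conclude. A strictly convex function that has compact sublevel sets (as established in the proof of \autoref{asymptoticconvergence}) attains its infimum at a unique point $r^*$, which is simultaneously its only critical point and its global minimiser. Since \autoref{asymptoticconvergence} guarantees that the trajectories of \eqref{eq:closedloopsystem} converge to the set of critical points of $\Phi$ for $\epsilon < \sqrt{\gamma/(\mu l)}$, and this set is the singleton $\{r^*\}$, the output $r(t)$ converges to $r^*$. Therefore the closed-loop system minimises \eqref{eq:cost:1}.

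The main (and essentially the only) subtlety is seeing why the Hessian is \emph{strictly} positive definite rather than merely positive semidefinite: the formation term $\gamma_1 (L_G\otimes I_2)$ on its own is singular, its null space corresponding to the rigid translations of the whole swarm (the $\mathbf{1}_N$ mode). It is precisely the target-tracking term $\gamma_2 I_{2N}$, with $\gamma_2 > 0$, that lifts the zero eigenvalue and selects a unique minimiser; without it one would obtain convergence only to an affine set of formation-optimal configurations. Everything else is a routine application of \autoref{eigenvalues} together with the standard fact that a coercive strictly convex function has a unique critical point equal to its global minimiser.
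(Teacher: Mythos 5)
Your proposal is correct and follows essentially the same route as the paper: compute the constant Hessian $\gamma_1(L_G\otimes I_2)+\gamma_2 I_{2N}$, use \autoref{eigenvalues} together with the nonnegativity of the Laplacian spectrum (duplicated by the Kronecker product with $I_2$) to bound its eigenvalues below by $\gamma_2>0$, conclude strict convexity and uniqueness of the critical point, and invoke \autoref{asymptoticconvergence}. Your added remark about the $\mathbf{1}_N$ translation mode being lifted by the $\gamma_2 I_{2N}$ term is a nice piece of intuition but does not change the argument.
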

\begin{proof}
To assess the strict convexity of \eqref{eq:cost:1} we investigate the second order condition. We derive from \eqref{eq:cost:gradient} $\hessian{\gls*{cost}} = \gls*{costformationgain}(\gls*{laplacian} \otimes \gls*{identity}_2)+ \gls*{costtargetgain}\gls*{identity}_{2\gls*{nagents}}$. Using \autoref{eigenvalues} we have that $\lambda \in \text{spec}[\gls*{costformationgain}\gls*{laplacian} + \gls*{costtargetgain}\gls*{identity}_{\gls*{nagents}}]$ if and only if $\exists \mu \in \text{spec}[\gls*{laplacian}]$ s.t. $(\lambda - \gls*{costtargetgain})/\gls*{costformationgain} = \mu \geq 0$ (\cite[Lemma 6.5]{FrancescoBullo2018LecturesSystems}) and thus, $\lambda \geq \gls*{costtargetgain} > 0$. Moreover, by simple permutation transformation we have  $\text{spec}\left[\gls*{laplacian} \otimes \gls*{identity}_2\right] = \text{spec}\left[ \gls*{identity}_2 \otimes \gls*{laplacian}\right] =  \text{spec}\left[\gls*{laplacian}\right] \cup \text{spec}\left[\gls*{laplacian}\right]$. Therefore, $\hessian{\gls*{cost}} \succ 0$ and the critical point $\gls*{input}^*$ s.t. $\nabla\gls*{cost}(\gls*{inputsteadystateoutputmap}(\gls*{input}^*)) = 0$ is unique and attains the minimum of the cost function \eqref{eq:cost:1}. From \autoref{asymptoticconvergence} the claim follows.
\end{proof}
The result of \autoref{corollary:optimality} does not imply that the closed-loop system \eqref{eq:closedloopsystem} has a unique equilibrium point $(\gls*{state}^*, \gls*{input}^*)$. However, it does imply that the set of equilibrium points share the same locations for the agents. Namely, $\forall (\gls*{state}_1, \gls*{input}_1), (\gls*{state}_2, \gls*{input}_2) \in S$, we have $\gls*{input}_1 = \gls*{input}_2$ and $\gls*{outputmap}(\gls*{state}_1) = \gls*{outputmap}(\gls*{state}_2)$, but in general $\gls*{orientation}_1 \neq \gls*{orientation}_2$. This is not unexpected: different initial conditions might lead to different final orientations for the agents.
\subsection{Topological Considerations on the Optimal Configuration}\label{optimality}
In this subsection, we further investigate the relation between the asymptotic configuration of the swarm and the topology of the formation. Indeed, although \autoref{corollary:optimality} guarantees that the robotics swarm converges to the optimal configuration with respect to \eqref{eq:cost:1}, it is not clear a priori whether this corresponds to the desired final configuration $r^*$. This is exemplified in \autoref{fig:Eformationwrong} in \autoref{examples}, where choosing the cost function gains inappropriately leads to a misshaped final configuration $\gls*{asymptoticposition} = \lim_{t \to \infty} \gls*{stackedpos}(t)$. \begin{lemma}\label{connectivityandformation}
Consider the same setting of \autoref{asymptoticconvergence}. If additionally $\gls*{costformationgain}\lambda_2 \gg \gls*{costtargetgain}$ holds, the final configuration $\gls*{asymptoticposition}$ is approximately in the target formation, namely 
$$\lim_{\gls*{costformationgain}\lambda_2/\gls*{costtargetgain} \to \infty}\kronmatrix{\gls*{incidence}}_2^T\gls*{asymptoticposition} = \gls*{distances}.$$
\end{lemma}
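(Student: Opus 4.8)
The plan is to compute the unique minimiser $\gls*{asymptoticposition}$ of \eqref{eq:cost:1} in closed form and then isolate the relative displacements $\kronmatrix{\gls*{incidence}}_2^T\gls*{asymptoticposition}$. By \autoref{corollary:optimality} the cost is strictly convex, so $\gls*{asymptoticposition}$ is its unique critical point, characterised by $\nabla\gls*{cost}(\gls*{asymptoticposition}) = 0$. Setting the gradient \eqref{eq:cost:gradient} to zero and inverting the Hessian, which is positive definite by \autoref{corollary:optimality}, I would write
\[
\gls*{asymptoticposition} = \left(\gls*{costformationgain}\kronmatrix{\gls*{laplacian}}_2 + \gls*{costtargetgain}\gls*{identity}_{2\gls*{nagents}}\right)^{-1}\left(\gls*{costformationgain}\kronmatrix{\gls*{incidence}}_2\gls*{distances} + \gls*{costtargetgain}\vecmatrix{\gls*{target}}_{\gls*{nagents}}\right).
\]

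Next I would left-multiply by $\kronmatrix{\gls*{incidence}}_2^T$ and use the two structural facts $\kronmatrix{\gls*{laplacian}}_2 = \kronmatrix{\gls*{incidence}}_2\kronmatrix{\gls*{incidence}}_2^T$ (from $\gls*{laplacian} = \gls*{incidence}\gls*{incidence}^T$) together with the push-through identity $A^T(\gls*{costformationgain}AA^T + \gls*{costtargetgain}\gls*{identity})^{-1} = (\gls*{costformationgain}A^TA + \gls*{costtargetgain}\gls*{identity})^{-1}A^T$ applied with $A = \kronmatrix{\gls*{incidence}}_2$. Writing $E := \kronmatrix{\gls*{incidence}}_2^T\kronmatrix{\gls*{incidence}}_2$, this yields
\[
\kronmatrix{\gls*{incidence}}_2^T\gls*{asymptoticposition} = \left(\gls*{costformationgain}E + \gls*{costtargetgain}\gls*{identity}_{2m}\right)^{-1}\left(\gls*{costformationgain}E\gls*{distances} + \gls*{costtargetgain}\kronmatrix{\gls*{incidence}}_2^T\vecmatrix{\gls*{target}}_{\gls*{nagents}}\right).
\]
The target contribution vanishes, since $\kronmatrix{\gls*{incidence}}_2^T\vecmatrix{\gls*{target}}_{\gls*{nagents}} = (\gls*{incidence}^T\gls*{ones}_{\gls*{nagents}})\otimes\gls*{target} = 0$ because every column of the incidence matrix sums to zero. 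Hence $\kronmatrix{\gls*{incidence}}_2^T\gls*{asymptoticposition} = (\gls*{costformationgain}E + \gls*{costtargetgain}\gls*{identity}_{2m})^{-1}\gls*{costformationgain}E\,\gls*{distances}$.

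It then remains to take the limit. I would diagonalise the symmetric positive-semidefinite matrix $E = (\gls*{incidence}^T\gls*{incidence})\otimes\gls*{identity}_2$: the nonzero eigenvalues of $\gls*{incidence}^T\gls*{incidence}$ coincide with those of $\gls*{laplacian} = \gls*{incidence}\gls*{incidence}^T$, so the smallest nonzero eigenvalue of $E$ is exactly the algebraic connectivity $\lambda_2$. In the spectral decomposition, the filter $(\gls*{costformationgain}E + \gls*{costtargetgain}\gls*{identity})^{-1}\gls*{costformationgain}E$ acts on each eigenvector with nonzero eigenvalue $\sigma \geq \lambda_2$ by the scalar $\gls*{costformationgain}\sigma/(\gls*{costformationgain}\sigma + \gls*{costtargetgain}) = (1 + \gls*{costtargetgain}/(\gls*{costformationgain}\sigma))^{-1}$, which tends to $1$ as $\gls*{costformationgain}\lambda_2/\gls*{costtargetgain}\to\infty$, and annihilates $\text{null}[E]$. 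The crucial observation is that $\gls*{distances}$ has no component in $\text{null}[E]$: the formation is realisable, i.e. $\gls*{distances} = \kronmatrix{\gls*{incidence}}_2^T r^*$, so $\gls*{distances} \in \text{range}[\kronmatrix{\gls*{incidence}}_2^T] = \text{range}[E]$. Thus the filter converges to the identity on the subspace containing $\gls*{distances}$, giving $\lim \kronmatrix{\gls*{incidence}}_2^T\gls*{asymptoticposition} = \gls*{distances}$.

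The main obstacle is the algebraic reduction: recognising that the push-through identity turns the node-space resolvent into an edge-space resolvent whose relevant spectrum is governed by $\lambda_2$, and realising that it is precisely the realisability of $\gls*{distances}$ that forces the limit to equal $\gls*{distances}$ exactly rather than its orthogonal projection onto $\text{range}[\kronmatrix{\gls*{incidence}}_2^T]$ (any cycle-space component of a non-realisable $\gls*{distances}$ would otherwise be lost). A minor technical point to verify is the genuine invertibility of $\gls*{costformationgain}E + \gls*{costtargetgain}\gls*{identity}$ and of its node-space counterpart, which follows from $\gls*{costformationgain},\gls*{costtargetgain} > 0$ and $E \succeq 0$.
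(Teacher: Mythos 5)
Your proof is correct, and it reaches the same conclusion by a genuinely different algebraic route. The paper stays in node space: it substitutes $\kronmatrix{B}_2\gls*{distances} = \kronmatrix{\gls*{laplacian}}_2\gls*{stackedpos}^*$ and $\vecmatrix{\gls*{target}}_{\gls*{nagents}} = \gls*{stackedpos}^* - \gls*{targetagentdist}$ to rewrite the minimiser as $\gls*{asymptoticposition} = \gls*{stackedpos}^* - (\gls*{costformationgain}\kronmatrix{\gls*{laplacian}}_2 + \gls*{costtargetgain}\gls*{identity}_{2\gls*{nagents}})^{-1}\gls*{costtargetgain}\gls*{targetagentdist}$, diagonalises the Laplacian to show that $(\gls*{costformationgain}\kronmatrix{\gls*{laplacian}}_2 + \gls*{costtargetgain}\gls*{identity}_{2\gls*{nagents}})^{-1}\gls*{costtargetgain}$ converges to the orthogonal projector onto $\text{null}[\kronmatrix{\gls*{laplacian}}_2] = \text{null}[\kronmatrix{\gls*{incidence}}_2^T]$, and concludes that $\kronmatrix{\gls*{incidence}}_2^T$ annihilates the residual term. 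You instead push $\kronmatrix{\gls*{incidence}}_2^T$ through the resolvent to get the exact edge-space expression $\kronmatrix{\gls*{incidence}}_2^T\gls*{asymptoticposition} = (\gls*{costformationgain}E + \gls*{costtargetgain}\gls*{identity})^{-1}\gls*{costformationgain}E\,\gls*{distances}$ with $E = \kronmatrix{\gls*{incidence}}_2^T\kronmatrix{\gls*{incidence}}_2$, kill the target term via $\gls*{incidence}^T\gls*{ones}_{\gls*{nagents}} = \gls*{zeros}$, and argue spectrally there; the key observations that the nonzero spectrum of $E$ equals that of $\kronmatrix{\gls*{laplacian}}_2$ (so is bounded below by $\lambda_2$) and that the realisable $\gls*{distances}$ lies in $\text{range}[E]$, orthogonal to the cycle space, are both correct. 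Both arguments hinge on the same spectral gap; yours makes explicit why realisability of $\gls*{distances}$ matters (a cycle-space component would be lost in the limit, leaving only a projection) and gives for free the quantitative bound $\norm{\kronmatrix{\gls*{incidence}}_2^T\gls*{asymptoticposition} - \gls*{distances}} \leq \tfrac{\gls*{costtargetgain}}{\gls*{costformationgain}\lambda_2 + \gls*{costtargetgain}}\norm{\gls*{distances}}$, whereas the paper's node-space form $\gls*{asymptoticposition} = \gls*{stackedpos}^* - (\cdot)^{-1}\gls*{costtargetgain}\gls*{targetagentdist}$ is the expression reused directly in \autoref{desiredfinalconfiguration}.
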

\begin{proof} Using $0 = \nabla\gls*{cost}(\gls*{asymptoticposition})$ (\autoref{asymptoticconvergence}) and $(\gls*{costformationgain}\kronmatrix{\gls*{laplacian}}_2 + \gls*{costtargetgain}\gls*{identity}_{2\gls*{nagents}}) \succ 0$ (\autoref{corollary:optimality}) we have that the final configuration of the agents is $$\gls*{asymptoticposition} = (\gls*{costformationgain}\kronmatrix{\gls*{laplacian}}_2 + \gls*{costtargetgain}\gls*{identity}_{2\gls*{nagents}})^{-1}(\gls*{costformationgain}\kronmatrix{\gls*{incidence}}_2\gls*{distances} + \gls*{costtargetgain}\vecmatrix{\gls*{target}}_{\gls*{nagents}}).$$ 
Using $\kronmatrix{\gls*{incidence}}_2\gls*{distances} = \kronmatrix{\gls*{incidence}}_2\kronmatrix{\gls*{incidence}}_2^T\gls*{stackedpos}^* = \kronmatrix{\gls*{laplacian}}_2\gls*{stackedpos}^*$ and $\vecmatrix{\gls*{target}}_{\gls*{nagents}} = \gls*{stackedpos}^* - \gls*{targetagentdist}$ we obtain 
\begin{align*}
    \gls*{asymptoticposition} &= (\gls*{costformationgain}\kronmatrix{\gls*{laplacian}}_2 + \gls*{costtargetgain}\gls*{identity}_{2\gls*{nagents}})^{-1}\left[(\gls*{costformationgain}\kronmatrix{\gls*{laplacian}}_2+\gls*{costtargetgain}\gls*{identity}_{2\gls*{nagents}})\gls*{stackedpos}^* - \gls*{costtargetgain}\gls*{targetagentdist}\right]\\
    &= \gls*{stackedpos}^* -(\gls*{costformationgain}\kronmatrix{\gls*{laplacian}}_2 + \gls*{costtargetgain}\gls*{identity}_{2\gls*{nagents}})^{-1} \gls*{costtargetgain}\gls*{targetagentdist}.
\end{align*}
Since $\kronmatrix{\gls*{laplacian}}_2$ is symmetric, we consider the decomposition $\kronmatrix{\gls*{laplacian}}_2 = U\textbf{diag}([0, \lambda_2, \dots, \lambda_{\gls*{nagents}}])U^T$.

Then (\autoref{eigenvalues}),
\begin{multline*}
    (\gls*{costformationgain}\kronmatrix{\gls*{laplacian}}_2 + \gls*{costtargetgain}\gls*{identity}_{2\gls*{nagents}})^{-1}\gls*{costtargetgain}\\
    = U\textbf{diag}([1,\frac{\gls*{costtargetgain}}{\gls*{costformationgain}\lambda_2 + \gls*{costtargetgain}},\dots,\frac{\gls*{costtargetgain}}{\gls*{costformationgain}\lambda_{\gls*{nagents}} + \gls*{costtargetgain}}])U^T
\end{multline*}
and taking the limit,
\begin{equation*}
    \lim_{\gls*{costformationgain}\lambda_2 / \gls*{costtargetgain} \to \infty} (\gls*{costformationgain}\gls*{laplacian} + \gls*{costtargetgain}\gls*{identity}_{2\gls*{nagents}})^{-1}\gls*{costtargetgain} = U\textbf{diag}([1, \gls*{zeros}_{\gls*{nagents}-1}^T])U^T,
\end{equation*}
because $0 < \lambda_2 \leq \dots \leq \lambda_{\gls*{nagents}}$ and thus, $\gls*{costformationgain}\lambda_{i} \gg \gls*{costtargetgain}\,\forall\,i\in\gls*{vertices}$.
Hence, for $\gls*{costformationgain}\lambda_2 \gg \gls*{costtargetgain}$, we have
\begin{equation*}
    (\gls*{costformationgain}\kronmatrix{\gls*{laplacian}}_2 + \gls*{costtargetgain}\gls*{identity}_{2\gls*{nagents}})^{-1}\gls*{costtargetgain} \approx U\textbf{diag}([1, \gls*{zeros}_{\gls*{nagents}-1}^T])U^T.
\end{equation*}
Therefore, 
\begin{align*}
    \kronmatrix{\gls*{incidence}}_2^T\gls*{asymptoticposition} &\approx \kronmatrix{\gls*{incidence}}_2^T(\gls*{stackedpos}^* - U\textbf{diag}([1, \gls*{zeros}_{\gls*{nagents}-1}^T])U^T\gls*{targetagentdist})\\
    &=\kronmatrix{\gls*{incidence}}_2^T\gls*{stackedpos}^* - \kronmatrix{\gls*{incidence}}_2^TU\textbf{diag}([1, \gls*{zeros}_{\gls*{nagents}-1}^T])U^T\gls*{targetagentdist} = \gls*{distances}
\end{align*}
and the final configuration is approximately in the target formation.

In the last equation we use the fact that $\kronmatrix{\gls*{incidence}}_2^TU\textbf{diag}([1, \gls*{zeros}_{\gls*{nagents}-1}^T])U^T = 0$. This can be readily seen as
\begin{align*}
    \kronmatrix{\gls*{laplacian}}_2&U\textbf{diag}([1, \gls*{zeros}_{\gls*{nagents}-1}^T])U^T\\
    &= U\textbf{diag}([0, \lambda_2, \dots, \lambda_{\gls*{nagents}}])U^TU\textbf{diag}([1, \gls*{zeros}_{\gls*{nagents}-1}^T])U^T = 0.
\end{align*}
Now, let $H = U\textbf{diag}([1, \gls*{zeros}_{\gls*{nagents}-1}^T])U^T$ and let $H_i$ be the $i$-th column of $H$.
Then, $\forall i \in \gls*{vertices}$, we have that $\gls*{laplacian}H_i = 0$. That is, $H_i \in \text{null}[\kronmatrix{\gls*{laplacian}}_2] = \text{null}[\kronmatrix{\gls*{incidence}}_2^T]$.

Therefore, $0 = \gls*{incidence}^TH = \gls*{incidence}^TU\textbf{diag}([1, \gls*{zeros}_{\gls*{nagents}-1}^T])U^T$.
\end{proof}

\autoref{connectivityandformation} has a topological interpretation. $\lambda_2$ is known as \textit{algebraic connectivity} \cite[Definition 6.7]{FrancescoBullo2018LecturesSystems} and characterises the connectivity of the graph \cite[Lemma 6.9]{FrancescoBullo2018LecturesSystems}. Hence, a stronger connectivity of the specified target formation is expected to result in a final configuration that is approximately the desired one. Moreover, since $\gls*{costformationgain}$ and $\gls*{costtargetgain}$ are gains in the gradient-flow \eqref{eq:controllaw:explicit}, they influence the speed of convergence to the target formation and the target location. Therefore, a stronger connectivity of the formation graph allows for a larger $\gls*{costtargetgain}$ that in turn allows a faster convergence to the target location.

Finally, we show that the agents gather in the target formation around the target location. Notice that we cannot have both a non-trivial target formation and all agents in the same position. Hence, the correctness of the asymptotic behaviour has to be investigated by means of bounds on the distance of the robotic swarm from the target location. 

\begin{theorem}[Correct final configuration]\label{desiredfinalconfiguration}
Consider the same settings of \autoref{connectivityandformation}, and let $\gls*{asymptoticposition}$ be the configuration the agents converge to. Then $\norm{\gls*{asymptoticposition} - \vecmatrix{\gls*{target}}_{\gls*{nagents}}} < \norm{\gls*{targetagentdist}}$, with $\kronmatrix{\gls*{incidence}}_2^T\gls*{asymptoticposition} \approx \gls*{distances}$.
\end{theorem}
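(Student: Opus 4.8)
The plan is to build directly on the closed-form expression for the asymptotic configuration established in the proof of \autoref{connectivityandformation},
\[
    \gls*{asymptoticposition} = \gls*{stackedpos}^* - (\gls*{costformationgain}\kronmatrix{\gls*{laplacian}}_2 + \gls*{costtargetgain}\gls*{identity}_{2\gls*{nagents}})^{-1}\gls*{costtargetgain}\gls*{targetagentdist},
\]
and to measure its distance from the stacked target $\vecmatrix{\gls*{target}}_{\gls*{nagents}}$. Since by definition $\gls*{stackedpos}^* = \vecmatrix{\gls*{target}}_{\gls*{nagents}} + \gls*{targetagentdist}$, subtracting $\vecmatrix{\gls*{target}}_{\gls*{nagents}}$ from both sides and factoring out $\gls*{targetagentdist}$ gives $\gls*{asymptoticposition} - \vecmatrix{\gls*{target}}_{\gls*{nagents}} = M\gls*{targetagentdist}$ with
\[
    M = \gls*{identity}_{2\gls*{nagents}} - \gls*{costtargetgain}(\gls*{costformationgain}\kronmatrix{\gls*{laplacian}}_2 + \gls*{costtargetgain}\gls*{identity}_{2\gls*{nagents}})^{-1}.
\]
The entire norm bound then reduces to proving the strict operator-norm estimate $\norm{M} < 1$.

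To obtain this, I would diagonalise $M$. Being a matrix function of the symmetric matrix $\kronmatrix{\gls*{laplacian}}_2$, $M$ is symmetric, so $\norm{M}$ equals its spectral radius. By \autoref{eigenvalues}, the eigenvalues of $\gls*{costformationgain}\kronmatrix{\gls*{laplacian}}_2 + \gls*{costtargetgain}\gls*{identity}_{2\gls*{nagents}}$ are $\gls*{costformationgain}\mu + \gls*{costtargetgain}$ for $\mu \in \text{spec}[\kronmatrix{\gls*{laplacian}}_2]$; inverting, scaling by $\gls*{costtargetgain}$ and subtracting from $1$ shows that each eigenvalue of $M$ has the form $\gls*{costformationgain}\mu/(\gls*{costformationgain}\mu + \gls*{costtargetgain})$. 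Because $\gls*{graph}$ is connected, $\text{spec}[\kronmatrix{\gls*{laplacian}}_2]$ consists of $0$ and the strictly positive $0 < \lambda_2 \leq \dots \leq \lambda_{\gls*{nagents}}$ (each appearing with the multiplicity induced by the Kronecker product), hence every eigenvalue of $M$ lies in $[0,1)$ and the largest, $\gls*{costformationgain}\lambda_{\gls*{nagents}}/(\gls*{costformationgain}\lambda_{\gls*{nagents}} + \gls*{costtargetgain})$, is strictly below $1$ as $\gls*{costtargetgain} > 0$. This yields $\norm{M} < 1$ for \emph{any} positive gains, so the distance bound does not even require $\gls*{costformationgain}\lambda_2 \gg \gls*{costtargetgain}$.

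Submultiplicativity of the induced norm then closes the first claim: $\norm{\gls*{asymptoticposition} - \vecmatrix{\gls*{target}}_{\gls*{nagents}}} = \norm{M\gls*{targetagentdist}} \leq \norm{M}\norm{\gls*{targetagentdist}} < \norm{\gls*{targetagentdist}}$, where strictness uses $\gls*{targetagentdist} \neq 0$, which holds for any non-trivial target formation. The accompanying relation $\kronmatrix{\gls*{incidence}}_2^T\gls*{asymptoticposition} \approx \gls*{distances}$ is exactly the conclusion of \autoref{connectivityandformation} under $\gls*{costformationgain}\lambda_2 \gg \gls*{costtargetgain}$, so it transfers without further work.

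The main obstacle is making the \emph{strictness} airtight rather than the bound itself. Since $\kronmatrix{\gls*{laplacian}}_2$ has a non-trivial kernel, $M$ carries a zero eigenvalue and is not a contraction in every direction; the strict inequality must therefore be deduced from $\norm{M} < 1$ together with $\gls*{targetagentdist} \neq 0$. As a reassuring cross-check, the centroid condition $\frac{1}{\gls*{nagents}}\sum_{i \in \gls*{vertices}} \gls*{targetagentdist}_i = 0$ (equivalent to $\frac{1}{\gls*{nagents}}\sum_i \gls*{stackedpos}_i^* = \gls*{target}$) places $\gls*{targetagentdist}$ in the range of $\kronmatrix{\gls*{laplacian}}_2$, i.e. orthogonal to the kernel on which $M$ acts trivially, so $M$ genuinely contracts $\gls*{targetagentdist}$.
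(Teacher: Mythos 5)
Your proposal is correct and follows essentially the same route as the paper: the identical decomposition $\gls*{asymptoticposition} - \vecmatrix{\gls*{target}}_{\gls*{nagents}} = \bigl[\gls*{identity}_{2\gls*{nagents}} - \gls*{costtargetgain}(\gls*{costformationgain}\kronmatrix{\gls*{laplacian}}_2 + \gls*{costtargetgain}\gls*{identity}_{2\gls*{nagents}})^{-1}\bigr]\gls*{targetagentdist}$, followed by the same spectral argument via \autoref{eigenvalues} showing every eigenvalue of that matrix equals $\gls*{costformationgain}\lambda/(\gls*{costformationgain}\lambda + \gls*{costtargetgain}) \in [0,1)$, hence its norm is strictly below one. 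Your extra remarks that strictness also needs $\gls*{targetagentdist} \neq 0$ and that $\gls*{targetagentdist}$ is orthogonal to $\text{null}[\kronmatrix{\gls*{laplacian}}_2]$ are correct refinements the paper leaves implicit, and do not change the argument.
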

\begin{proof}
From \autoref{connectivityandformation} we have $\gls*{asymptoticposition} = \gls*{stackedpos}^* - (\gls*{costformationgain}\kronmatrix{\gls*{laplacian}}_2 + \gls*{costtargetgain}\gls*{identity}_{2\gls*{nagents}})^{-1}\gls*{costtargetgain}\gls*{targetagentdist}$ and thus, 
\begin{align*}
    \norm{\gls*{asymptoticposition} - \vecmatrix{\gls*{target}}_{\gls*{nagents}}} &= \norm{\gls*{asymptoticposition} - \gls*{stackedpos}^* + \gls*{targetagentdist}} \\
    &= \norm{[\gls*{identity}_{2\gls*{nagents}}-(\gls*{costformationgain}\kronmatrix{\gls*{laplacian}}_2 + \gls*{costtargetgain}\gls*{identity}_{2\gls*{nagents}})^{-1}\gls*{costtargetgain}]\gls*{targetagentdist}}\\
    &\leq\norm{\gls*{identity}_{2\gls*{nagents}}-(\gls*{costformationgain}\kronmatrix{\gls*{laplacian}}_2 + \gls*{costtargetgain}\gls*{identity}_{2\gls*{nagents}})^{-1}\gls*{costtargetgain}}\norm{\gls*{targetagentdist}} < \norm{\gls*{targetagentdist}}.
\end{align*}
Indeed, (\autoref{eigenvalues}, \cite[Lemma 6.5]{FrancescoBullo2018LecturesSystems}), $\forall \mu \in \text{spec}[\gls*{identity}_{2\gls*{nagents}}-(\gls*{costformationgain}\kronmatrix{\gls*{laplacian}}_2 + \gls*{costtargetgain}\gls*{identity}_{\gls*{nagents}})^{-1}\gls*{costtargetgain}]$, there exists $\lambda \in \text{spec}[\kronmatrix{\gls*{laplacian}}_2]$ such that $$\mu = 1 - \frac{\gls*{costtargetgain}}{\gls*{costformationgain}\lambda + \gls*{costtargetgain}} = \frac{\gls*{costformationgain}\lambda}{\gls*{costformationgain}\lambda + \gls*{costtargetgain}} \in [0,1),$$
and thus, $\norm{\gls*{identity}_{2\gls*{nagents}}-(\gls*{costformationgain}\kronmatrix{\gls*{laplacian}}_2 + \gls*{costtargetgain}\gls*{identity}_{2\gls*{nagents}})^{-1}\gls*{costtargetgain}} < 1$.
Finally, $\kronmatrix{\gls*{incidence}}_2^T\gls*{asymptoticposition} \approx \gls*{distances}$ is immediate from \autoref{connectivityandformation}.
\end{proof}

As one would intuitively expect, the formation shrinks around the target location due to the attractive force. However, in the limit $\gls*{costformationgain}\lambda_2 \gg \gls*{costtargetgain}$ we achieve the correct inter-agent displacements.

\section{Examples And Empirical Results}\label{examples}
In this section, we provide simulation\footnote{The \textsc{MatLab} code for the simulations is available at \url{github.com/antonioterpin/feedback-optimization-swarm-robotics}} examples to underline the transient behaviour under the provided cost function \eqref{eq:cost:1}, and we show what can go wrong when the assumptions of \autoref{desiredfinalconfiguration} do not hold.
\begin{figure}
    \centering
    \includegraphics[width=\linewidth]{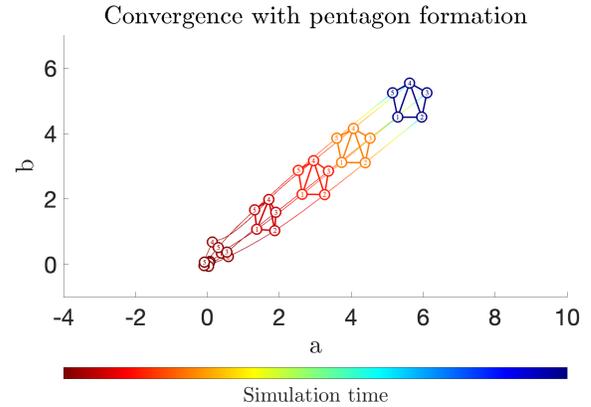}
    \caption{Evolution over time of a swarm of $5$ agents in the $\gls*{xposition}-\gls*{yposition}$ plane. The target formation is a pentagon and the target location is $\gls*{target} = [5.65, 5.03]^T$. The time evolution is colour-coded from red (beginning) to blue (end).}
    \label{fig:pentagonformation}
\end{figure}
In \autoref{fig:pentagonformation} a simulation for a swarm of $5$ agents is shown. The pentagon formation is specified by 
\begin{equation*}
    \gls*{incidence} = 
        \begin{bmatrix}
            1 & 0 & 0 & 0 & -1 & 1 & 0\\
            -1 & 1 & 0 & 0 & 0 & 0 & -1\\
            0 & -1 & 1 & 0 & 0 & 0 & 0\\
            0 & 0 & -1 & 1 & 0 & -1 & 1\\
            0 & 0 & 0 & -1 & 1 & 0 & 0
        \end{bmatrix}
\end{equation*}
and the desired relative inter-agent displacement vector (reshaped in a $2\times\gls*{nagents}$ matrix for readibility reasons) is
\begin{equation*}
    \gls*{distance} = 
        \begin{bmatrix}
        -1.0 & -0.3 & 0.8 & 0.8 & -0.3 & -0.5 & -0.5\\
         0.0 & -1.0 & -0.3 & 0.3 & 1.0 & -1.3 & 1.3
    \end{bmatrix}.
\end{equation*}
The agents start from random positions in a neighbourhood around the origin, they assemble in the target formation and move together towards the specified target location. In \autoref{fig:pentagonformation}, one can appreciate the time evolution visualised by the change in colour. It is apparent that the specified cost function guarantees that the agents do not only drive to the target location before assembling in the target formation, but they do so early on. Hence, the desired transient behaviour is empirically obtained. Characterising it quantitatively is still an open question and a topic of ongoing research.
\begin{figure}
    \centering
    \includegraphics[width=\linewidth]{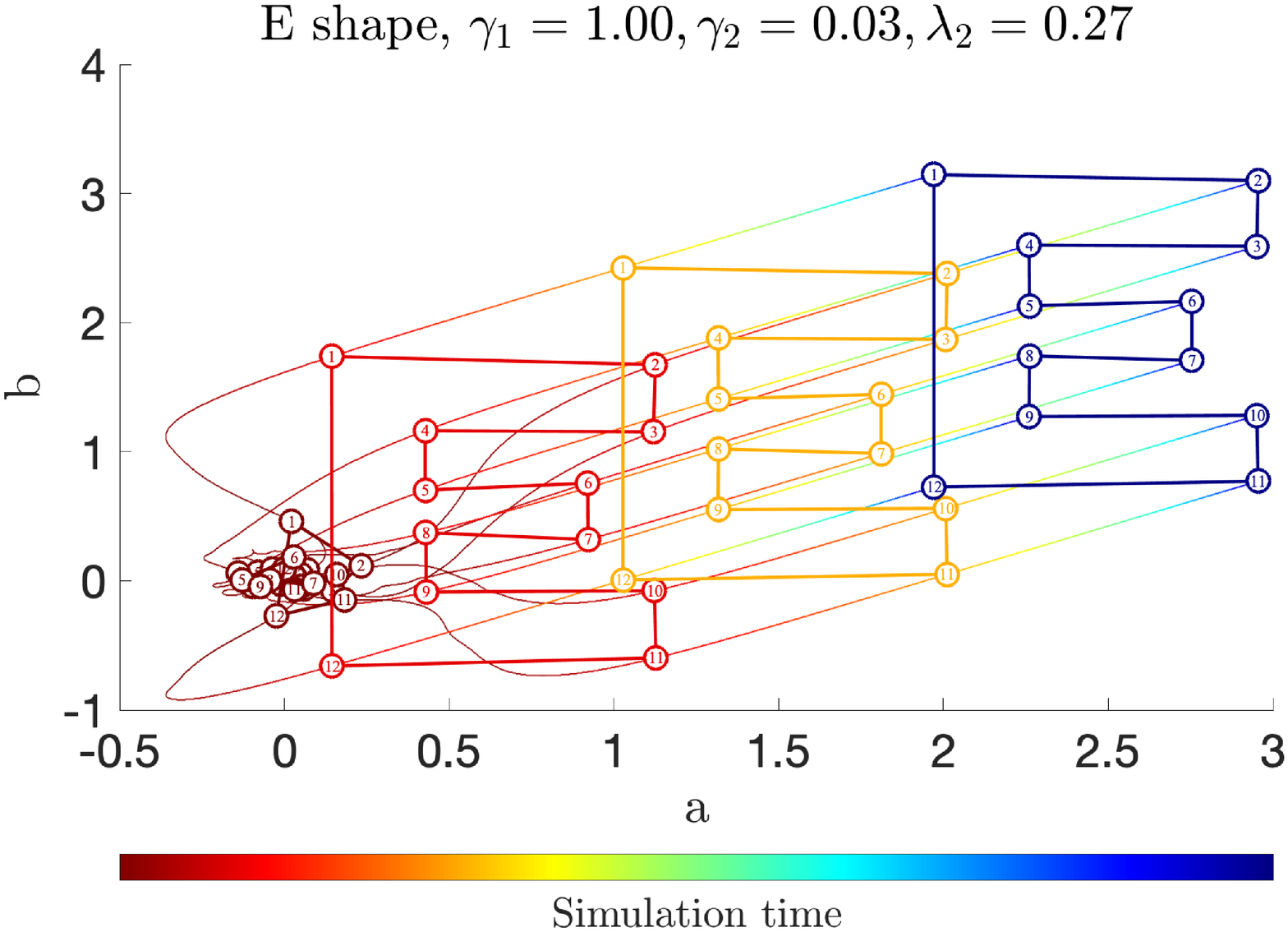}
    \caption{Evolution over time of a swarm of $12$ agents in the $\gls*{xposition}-\gls*{yposition}$ plane. The robots converge to the target formation and location asymptotically, since the conditions of \autoref{desiredfinalconfiguration} hold. The time evolution is colour-coded from red (beginning) to blue (end).}
    \label{fig:Eformation}
\end{figure}
\begin{figure}
    \centering
    \includegraphics[width=\linewidth]{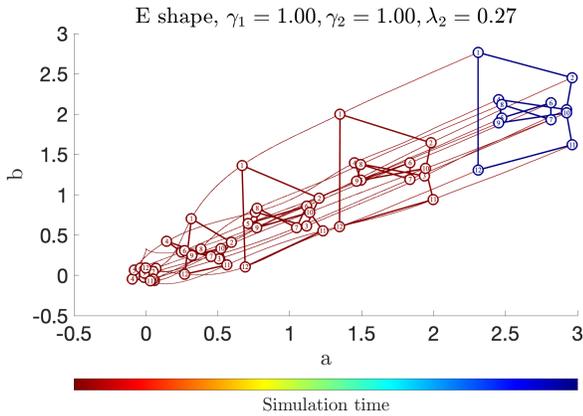}
    \caption{Evolution over time of a swarm of $12$ agents in the $\gls*{xposition}-\gls*{yposition}$ plane. Oppositely to \autoref{fig:Eformation}, the conditions of \autoref{desiredfinalconfiguration} do not hold and thus, the final configuration is not the desired one. That is, the cost function does not capture the desired goal of the task. The time evolution is colour-coded from red (beginning) to blue (end).}
    \label{fig:Eformationwrong}
\end{figure}

Next, we consider an E-shaped formation, which is clearly not circularly symmetric around its centroid and thus, the cost term \eqref{eq:cost:1:target} is expected to shrink and distort the formation for poorly chosen gains.
In \autoref{fig:Eformation} and \autoref{fig:Eformationwrong} the simulations of the robotics swarm with different values for the gains $\gls*{costformationgain}$ and $\gls*{costtargetgain}$ are shown. It is apparent that when the conditions of \autoref{desiredfinalconfiguration} are not fulfilled, the final configuration of the agents is not the desired one (\autoref{fig:Eformationwrong}). On the other hand, we empirically notice that a factor of ${\sim}30$ between $\gls*{costformationgain}\lambda_2$ and $\gls*{costtargetgain}$ is enough to obtain the desired result in \autoref{fig:Eformation}. Moreover, the intuition on the speed of the flocking is empirically verified. Indeed, for $\gls*{costformationgain}\lambda_2 \gg \gls*{costtargetgain}$  (\autoref{fig:Eformation}), the agents converge faster to the target formation and then relatively slowly towards the target location, whereas for $\gls*{costformationgain}\lambda_2 \cancel{\gg} \gls*{costtargetgain}$ (\autoref{fig:Eformationwrong}) we have a fast flocking towards the target location, but the agents do not gather in the target formation.

\section{Conclusions And Future Work}\label{conclusion}
In this article, we applied feedback optimisation to drive a swarm of agents in formation towards a target location in a distributed manner. The correctness of the algorithm was rigorously proved by means of sufficient conditions on the topology of the specified formation and on the gains of the gradient-flow.

To conclude, we want to outline some possible research directions. First, being able to prove the optimality of the final configuration in presence of non-convexities due to obstacle avoidance terms or visibility constraints would represent a natural extension to the work presented in this paper.
Additionally, another interesting venue of research would be to quantify to what extent feedback optimisation implicitly deals with noisy dynamics and input saturation. Finally, quantitatively characterising the transient behaviour of the closed loop system is still a major question to address in the context of feedback optimisation.


\bibliographystyle{ieeetr}
\bibliography{references.bib}
\appendix
\subsection{Proof of \autoref{expstabilityllc}}\label{proofs:expstabilityllc}
In the following we drop the indices for convenience.
First, we observe that, for a fixed $\gls*{llcinput}$, we have
\begin{align}
    \begin{split}
    \dot{\gls*{poserr}} &= \frac{-(\gls*{llcinput} - \gls*{position})^T}{\norm{\gls*{llcinput} - \gls*{position}}}\dot{\gls*{position}} = -\frac{\gls*{poserrx}\gls*{vel}\cos(\gls*{orientation}) + \gls*{poserry}\gls*{vel}\sin(\gls*{orientation})}{\gls*{poserr}} \\
    &= -\gls*{vel}\cos(\gls*{angerr})\label{eq:poserr:dynamics}
    \end{split}\\
    \dot{\gls*{angerr}} &= \frac{\gls*{vel}}{\gls*{poserr}}\sin(\gls*{angerr}) - \gls*{angvel}\label{eq:angerr:dynamics},
\end{align}
where we use trigonometric identities and we notice that $\cos(\atantwo(b,a)) = a / \sqrt{a^2+b^2}$ and $\sin(\atantwo(b,a)) = b / \sqrt{a^2+b^2}$.
Substituting the proposed control law we obtain
\begin{align}
    \dot{\gls*{poserr}} &= -\gls*{llcinputgain}\gls*{poserr}\cos(\gls*{angerr})^2, \label{poserr:dynamics}\\
    \dot{\gls*{angerr}} &= -\gls*{llcinputgain}\sin(\gls*{angerr}). \label{angerr:dynamics}
\end{align}
Noticing that $\gls*{angerr}$ is decoupled from $\gls*{poserr}$, we consider the Lyapunov function 
$V(\gls*{angerr}) = \frac{1}{2} \gls*{angerr}^2,$
which has compact sub-level sets (positive quadratic form). It holds that
\begin{equation*}
    \dot{V}(\gls*{angerr}) = -\gls*{llcinputgain}\gls*{angerr}\sin(\gls*{angerr})\leq 0 \text{ and }
    \dot{V}(\gls*{angerr}) = 0 \iff \sin(\gls*{angerr}) = 0.
\end{equation*}
Moreover, $\gls*{angerr} = \pi$ is an unstable equilibrium of \eqref{angerr:dynamics} and thus, $\gls*{angerr} = 0$ is almost globally asymptotically stable. Namely, any trajectory originating in $(-\pi, \pi)$ will converge to $\gls*{angerr} = 0$ \cite[Theorem 15.4]{FrancescoBullo2018LecturesSystems}.
Furthermore, \eqref{poserr:dynamics} has the explicit solution $$\gls*{poserr}(t) = \gls*{poserr}(t_0)\text{exp}\left\{-\gls*{llcinputgain}\int_{t_0}^{t}\cos(\gls*{angerr}(s))^2ds\right\}.$$
Since $\gls*{angerr} = 0$ is almost globally asymptotically stable, $\exists \; T$ s.t. $\forall\;t > T\;,\abs{\gls*{angerr}(t)} < \eta < \pi/2$ for some $\eta$. Hence,
for $t_0 \leq t \leq T$, we have that (mean value Theorem) $$\exists \; K_t > 0 \text{ s.t. } \gls*{poserr}(t) = \gls*{poserr}(t_0)\text{exp}\left\{-\gls*{llcinputgain}K_t(t-t_0)\right\}.$$
From the continuity of the definite integral there exists a continuous map $t \mapsto K_t$ and thus, $\exists \; K_1 = \gls*{llcinputgain}\min_{t \in [t_0,T]} K_t > 0$.
Whereas, for $t > T$, let $K_T = \int_{t_0}^{T}\cos(\gls*{angerr}(s))^2ds$ and $K_2 / \gls*{llcinputgain} = \cos(\eta)^2$. Then, we have
\begin{align*}
    \int_{t_0}^{t}\cos(\gls*{angerr}(s))^2&ds = \int_{t_0}^{T}\cos(\gls*{angerr}(s))^2ds + \int_{T}^{t}\cos(\gls*{angerr}(s))^2ds\\
    &\geq K_T + \int_{T}^{t}\cos(\eta)^2ds = K_T + \frac{K_2}{\gls*{llcinputgain}} (t - T)\\
    &=\underbrace{K_T + \frac{K_2}{\gls*{llcinputgain}} (t_0 - T)}_{M_2 / \gls*{llcinputgain}} + \frac{K_2}{\gls*{llcinputgain}} (t - t_0)\\
    &= \frac{M_2}{\gls*{llcinputgain}} + \frac{K_2}{\gls*{llcinputgain}} (t - t_0).
\end{align*}

Therefore, with $M = \gls*{poserr}(t_0)\max(1, \text{exp}(-M_2))$ and $0 < \mu = \min(K_1, K_2)$ we have that
$$\gls*{poserr}(t) \leq M\text{exp}\left\{-\mu(t-t_0)\right\}.\qed$$
\subsection{Steady state input output map}\label{steadystatemaps}
From the definition of the steady-state input-output map $\gls*{inputsteadystateoutputmap}$ of the $i$-agent, we set $\gls*{dynamics}([\gls*{inputsteadystateoutputmap}_i(\gls*{input}_i)^T, \gls*{orientation}_i]^T, \gls*{input}_i) = 0$.
Hence,
\begin{align*}
\begin{cases}
    0 = \gls*{vel}_i\cos(\gls*{orientation}_i)\\
    0 = \gls*{vel}_i\sin(\gls*{orientation}_i)
\end{cases} &\iff 0 = \gls*{vel}_i = \norm{\gls*{input}_i - \gls*{inputsteadystateoutputmap}_i(\gls*{input}_i)}
\end{align*}
That is, $\gls*{input}_i = \gls*{inputsteadystateoutputmap}_i(\gls*{input}_i)$. Given \autoref{expstabilityllc} 
 and stacking the maps of the single agents we obtain $\gls*{inputsteadystateoutputmap}(\gls*{input}) = \gls*{input}$.\qed
\subsection{Proof of \autoref{compactsublevelsets}}\label{proofs:compactsublevelsets}
Let $\Omega_c = \left\{(\gls*{state},\gls*{input}) | Z(\gls*{state},\gls*{input}) \leq c\right\}$ be a sublevel set of $Z$. $W \geq 0$ implies $V \leq c$, and since V has compact sublevel sets, $\norm{u} \leq U$ for some $U$. Similarly, $W \leq c$ since $V \geq 0$. Let $\gls*{stackedpos} = \gls*{outputmap}(\gls*{state})$ be the measured part of the state, and $\gls*{unobsstate}$ the unmeasured, stable portion. Notice that the stability of $\gls*{unobsstate}$ implies that for any initial condition $\gls*{state}(t_0)$ there exists $B$ such that $\norm{\gls*{unobsstate}} \leq B$. From \autoref{expstableobservedstate}-i we have $\norm{\gls*{stackedpos} - \gls*{inputsteadystateoutputmap}(\gls*{input})} \leq \frac{1}{\gls*{llcexpstabilityalpha}}W(\gls*{state},\gls*{input}) \leq \frac{c}{\gls*{llcexpstabilityalpha}}$,
and thus,
\begin{align*}
    \norm{\gls*{state}} &\leq \norm{\gls*{unobsstate}} + \norm{\gls*{stackedpos}}\\
    &\leq \norm{\gls*{unobsstate}} + \norm{\gls*{stackedpos} - \gls*{inputsteadystateoutputmap}(\gls*{input})} + \norm{\gls*{inputsteadystateoutputmap}(\gls*{input}) - \gls*{inputsteadystateoutputmap}(0)} + \norm{\gls*{inputsteadystateoutputmap}(0)}\\
    &\leq \norm{\gls*{unobsstate}} + \frac{c}{\gls*{llcexpstabilityalpha}} + \gls*{hlipschitzconst}\norm{\gls*{input}} + \norm{\gls*{inputsteadystateoutputmap}(0)}\\
    &\leq B + \frac{c}{\gls*{llcexpstabilityalpha}} + \gls*{hlipschitzconst}U + \norm{\gls*{inputsteadystateoutputmap}(0)},
\end{align*}
where $\gls*{hlipschitzconst}$ is the Lipschitz constant of $\gls*{inputsteadystateoutputmap}$ (\autoref{cost:hlipschitz}). Therefore, we can conclude that $\norm{\gls*{state}}$ is bounded and thus, $\norm{(\gls*{state}, \gls*{input})}$ is bounded for all $(\gls*{state},\gls*{input}) \in \Omega_c$. The continuity of $V$ and $W$ (and thus, $Z$) allows us to conclude that $\Omega_c$ is also closed. Indeed, the pre-image of a continuous map on a closed set (such as $[0, c]$) is closed.\qed
\end{document}